\newenvironment{myproof}[1][Proof]{\par\noindent\textbf{#1.} }{\hfill$\square$\par}
\newcommand{\writelabel}[1]{#1\def\@currentlabel{#1}}
\newcommand{\minwidthmathbox}[2]{%
  \mathmakebox[{\ifdim#1<\width\width\else#1\fi}]{#2}%
}
\newcommand{\tbf}{\bfseries}
\newcommand{\bbC}{\ensuremath{\mathbb{C}}}
\newcommand{\bbP}{\ensuremath{\mathbb{P}}}
\newcommand{\bbR}{\ensuremath{\mathbb{R}}}
\newcommand{\bbZ}{\ensuremath{\mathbb{Z}}}
\newcommand{\mrelspace}[1]{\mathrel{\mspace{#1}}}
\let\rightarroworig\rightarrow
\renewcommand{\rightarrow}
  {\protect\relbar\mrelspace{-9.7mu}\rightarroworig}
\let\leftarroworig\leftarrow
\renewcommand{\leftarrow}
  {\protect\leftarroworig\mrelspace{-9.7mu}\relbar}
\renewcommand{\longrightarrow}
  {\protect\relbar\mrelspace{-3.2mu}\relbar\mrelspace{-9.5mu}\rightarroworig}
\newcommand{\Hom}{\operatorname{Hom}}
\newcommand{\GL}[1]{\ensuremath{\mathrm{GL}_{#1}}}
\newcommand{\SL}[1]{\ensuremath{\mathrm{SL}_{#1}}}
\renewcommand{\det}{\ensuremath{\mathrm{det}}}
\DeclareMathOperator{\PSL}{PSL}
\numberwithin{equation}{section} 
\numberwithin{equation}{section} 
\newtheorem{theorem}{Theorem}[section]
\newtheorem{lemma}[theorem]{Lemma}
\newtheorem{corollary}[theorem]{Corollary}
\newtheorem{example}[theorem]{Example}
\newtheorem{proposition}[theorem]{Proposition}
\newtheorem{remark}[theorem]{Remark}
\newtheorem{definition}[theorem]{Definition}
\newtheorem{claim}[theorem]{Claim}
\title{%
  Deformations of differential equations
}
\author{%
ZIYU ZHANG%
}
\date{}
\begin{document}

 \thispagestyle{scrplain}
\begingroup
\deffootnote[1em]{1.5em}{1em}{\thefootnotemark}
\maketitle
\endgroup


\begin{abstract}
\small
\noindent
{\tbf Abstract:}
We study perturbations of linear differential equations equations, deriving explicit series solutions, using Dyson-type expansions. We analyze the monodromy of deformationed solutions in a number of examples, and relate this to cocycles in a cohomological framework We also analyze the spectral properties of the hypergeometric equation under infinitesimal deformations, derive the first-order eigenvalue correction via orthogonality, and establish natural bounds for the perturbed problem. 

\end{abstract}


\vspace{-1.5em}
\renewcommand{\contentsname}{}
\setcounter{tocdepth}{2}
\tableofcontents
\vspace{1.5em}


\Needspace*{4em}

\addcontentsline{toc}{section}{Introduction}
\section*{Introduction}
The study of linear differential equations, especially those with regular singularities such as hypergeometric equations, is fundamental in both pure and applied mathematics. These equations not only model a wide range of phenomena in mathematical physics but also provide rich structures for investigating monodromy representations, deformation theory, and spectral geometry\cite{beukers1989monodromy}\cite{tveiten2009deformation}\cite{molag2017monodromygeneralizedhypergeometricequation}.\\
In recent decades, the deformation of linear differential equations has attracted significant attention, particularly in understanding how solutions and monodromy groups evolve under perturbations. While classical treatments often assume holomorphic perturbations, or perturb the locations of singularities \cite{mitschi2012monodromy,borodin2004isomonodromy}, this work extends the scope to include more general --- even non-holomorphic --- deformations, thereby revealing deeper geometric and cohomological structures\cite{tveiten2009deformation, 2022}.\\

This paper introduces a novel perspective for interpreting perturbative corrections of linear differential equations. We construct series solutions to perturbed differential equations equations and analyze how their monodromy groups change. The monodromy deformation is interpreted via Dyson-type series expansions, which allow us to express the perturbed fundamental matrix in terms of integral corrections. These corrections are shown to represent 1-cocycles\cite{goldman1988deformation}, giving rise to elements in the first cohomology group, and under suitable conditions, even tangent vectors in Teichmüller space\cite{sernesi2000overview}.\\
Finally, we study the hypergeometric equation under an \emph{infinitesimal deformation}. Exploiting the orthogonality of hypergeometric functions, we derive the first-order correction to the eigenvalue. Because our analysis is confined to the infinitesimal regime, we establish explicit upper and lower bounds for the perturbed problem, which in turn yield a natural admissibility (constraint) condition for the correction. Within this framework, we further examine the Dirac delta $\delta$ as a model perturbation, justify a distributional approximation to its effect, and delineate the range of validity and limitations of the resulting asymptotics.

The main contributions of this paper are as follows:
\begin{itemize}
    \item We derive the general series solution to a deformed differential equation using Dyson-type expansions, and compute the first-order monodromy corrections explicitly in several examples\cite{Argeri:2014qva}.
    \item We prove that first-order monodromy deformations correspond to elements of cohomology.
    \item We analyse the first-order correction of the eigenvalue of the hypergeometric equation, and determine the constraints on the variation of eigenvalues.
\end{itemize}

\section*{Acknowledgment}
I would like to express my deepest gratitude to Dr. Adam Keilthy for his invaluable guidance and support throughout the course of this research. His insights into the deformation theory of hypergeometric equations—especially regarding series solutions, matrix representations, and monodromy transformations—have played a crucial role in shaping the core structure of this work.

I am sincerely thankful for his mentorship, which has been both rigorous and generous, and which has profoundly impacted my academic journey.

\section{Preliminaries}
\subsection{Generalised Hypergeometric Equations} \ \ The generalized hypergeometric differential equation is an $n$-th order linear differential equation with regular singularities at $x = 0$, $x = 1$, and $x = \infty$. It is commonly written in the form
\begin{equation}
\left[ \prod_{j=1}^{p} \left( x \frac{d}{dx} + a_j \right) -  \frac{d}{dz}\prod_{k=1}^{q} \left( x \frac{d}{dx} + b_k -1 \right) \right] y(x) = 0
\label{eq 1.1}
\end{equation}
where $\{a_j\}_{j=1}$ and $\{b_k\}_{k=1}$ are complex parameters and very often we choose q=p-1.
In some cases, the equation${\eqref{eq 1.1}}$ can be written into general form
\begin{equation}
  \sum^p_{n=0}\sum^p_{k=n}c(k,n)e_{p-k}(a_1,..)z^n\frac{d^ny}{dz^n} -\sum_{n=1}^{p} 
\sum_{k=n-1}^{p-1}
e_{p-k-1}(b_1-1,...)
\sum_{j=n-1}^{k}
\binom{k}{j}
c(j, n-1)
z^{n-1} \frac{d^n y}{dz^n}=0
  \label{eq1.2}
\end{equation}
where c(x,k) is the Stirling number of the secondkind \cite{mansour2016commutation} and ${e_{p-k-1}
  (a_1,...,a_n)}$ are the elementary symmetry polynomials \cite{macdonald1998symmetric}.

The derivation of the equation ${\eqref{eq1.2}}$ is shown in Appendix A, though we will only need the case of $(p,q)=(2,1)$.
A solution is given by the generalized hypergeometric series\cite{beukers1989monodromy}:
\begin{equation}
{}_pF_{q} \left( \begin{array}{c}
a_1, \dots, a_p \\
b_1, \dots, b_{q}
\end{array}; x \right)
= \sum_{m=0}^{\infty} \frac{(a_1)_m \cdots (a_p)_m}{(b_1)_m \cdots (b_{q})_m} \cdot \frac{x^m}{m!},
\end{equation}
where $(a)_m$ denotes the Pochhammer symbol:
\begin{equation}
(a)_m = a(a+1)\cdots(a+m-1) = \frac{\Gamma(a + m)}{\Gamma(a)}.
\end{equation}

The hypergeometric differential equation has singularities only at $x = 0$, $x = 1$, and $x = \infty$. These are all regular singular points, and so the local behavior of solutions near them is governed by Frobenius theory.
\begin{example}
    From equation ${\eqref{eq1.2}}$, the second order hypergeometric equation can be written as
    \begin{equation}
       x(1 - x) y'' + [c - (a + b + 1)x] y' - ab y =0 
       \label{eq1.3} 
    \end{equation}
    Assuming $a,b,c$ are sufficiently generic \cite{riley2006mathematical}, the general solution of ${\eqref{eq1.3}}$ around 0 is 
    \begin{equation}
      y(x)= A{}_2F_1(a, b; c; x) +  Bx^{1 - c} {}_2F_1(a - c + 1, b - c + 1; 2 - c; x). 
      \label{1.7}
    \end{equation}
    Similarly, the general solution around 1 is 
    \begin{equation}
       y(x) =A{}_2F_1(a, b; a + b - c + 1; 1 - x) +B(1 - x)^{c - a - b} {}_2F_1(c - a, c - b; c - a - b + 1; 1 - x), \quad \text{if } c - a - b \notin \mathbb{Z} 
       \label{equation 1.8}
    \end{equation}
\end{example}
This will be a recurring example throughout this article.\\  
\subsection{Dyson Type Series}
In quantum field theory, using the Dyson series to expand the time evolution is a powerful tool for solving differential equations arising from physical problems. Here, we introduce the Dyson-type expansion of the integral to address some computational problems in this research.\\
\begin{definition}
    Any linear differential equation
    \[y^{(n)} + A_{n-1}y^{(n-1)} + \cdots + A_1y^\prime + A_0y = 0\]
    can be written as
    \begin{equation}
        \begin{pmatrix}
            y^{\prime}\\
            y''\\
            \vdots\\
            y^{n}
    
        \end{pmatrix} =\begin{pmatrix}

           0 & 1 & 0&\cdots &0\\
            0 & 0 &  1& \cdots & 0\\
          
            \vdots & \vdots & \ddots & \vdots\\
            0 & \cdots & 0  & 0 & 1\\
             -A_{n-1} & -A_{n-2} & & \cdots & -A_0\\
        \end{pmatrix} \begin{pmatrix}
                      y\\
            y^{\prime}\\
            \vdots\\
            y^{n-1}
     \end{pmatrix}
     \label{eq1.4}
    \end{equation}
    \label{def ma 1}
\end{definition}
Let \( \{y_1(x), \dots, y_n(x)\} \) be a fundamental system of solutions to the corresponding homogeneous equation \( L[y] = 0 \).\\
In what follows, we often write
\[\vec{\psi}=\begin{pmatrix}
                   y\\
            y^{\prime}\\
            \vdots\\
            y^{n-1}
     \end{pmatrix}\]
And so we can write our differential equation as
\[\frac{d\vec{\psi}}{dx} = A(x)\vec{\psi}.\]
From \cite{Argeri:2014qva}, a (possibly divergent) series  solution of ${\Vec{\psi}}$ is given by
   \begin{align}\label{Dyson 1}
       \Vec{\psi} &=\mathcal{P} exp(\int_{x_0}^{x}dxA(x))\Vec{\psi_0}\\
       &= \left(I+\left(\int^{x}_0 A(x_1)dx_1\right) + \left(\int^x_0A(x_2)dx_2\int^x_0 A(x_1)dx_1\right)+\cdots\right)\Vec{\psi_0}
   \end{align}
for some initial vector $\vec{\psi}_0$. 

\begin{theorem}
   The ordinary differential matrix equation
    \begin{equation}
        \frac{d \Vec{\psi}}{dx} = \rho A\Vec{\psi}
        \label{Dyson 2}
    \end{equation}
admits a formal series solution
\begin{equation}
   \Vec{\psi} = \Vec{\psi_0} + \left(\left(\int^{x}_0 A(x_1)dx_1\right)\rho + \left(\int^x_0A(x_2)dx_2\int^x_0 A(x_1)dx_1\right)\rho^2+\cdots\right)\Vec{\psi_0}
   \label{Dyson 3}
\end{equation}
\end{theorem}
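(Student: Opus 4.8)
The plan is to verify directly that the right-hand side of \eqref{Dyson 3} is a formal solution of \eqref{Dyson 2}, rather than to argue anything about convergence. The cleanest route is to observe that \eqref{Dyson 2} is obtained from the unperturbed equation $\frac{d\vec{\psi}}{dx}=A\vec{\psi}$ by the rescaling $A\mapsto\rho A$, so that the path-ordered solution furnished by \eqref{Dyson 1} applies verbatim with $A$ replaced by $\rho A$. Since $\rho$ is a scalar deformation parameter commuting with every entry of $A$, the $n$-th term of the path-ordered exponential $\mathcal{P}\exp\!\left(\int_0^x \rho A\,dx\right)$ contains exactly $n$ factors of $\rho A$; pulling the $n$ scalars out produces a global factor $\rho^n$ multiplying the $n$-th iterated integral of $A$. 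Collecting these terms reproduces \eqref{Dyson 3} exactly. (It is precisely the commutativity of $\rho$ that makes the factored form legitimate; for a non-central $\rho$ the powers could not be gathered and the stated form would fail.)

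To make this self-contained I would set up the following bookkeeping. Write the proposed solution as $\vec{\psi}=\sum_{n=0}^{\infty}T_n(x)\,\rho^n\vec{\psi}_0$, where $T_0(x)=I$ and $T_n(x)$ denotes the $n$-fold path-ordered integral
\[T_n(x)=\int_0^x A(x_n)\int_0^{x_n} A(x_{n-1})\cdots\int_0^{x_2} A(x_1)\,dx_1\cdots dx_n,\]
which is the precise meaning of the nested integrals appearing in \eqref{Dyson 1} and \eqref{Dyson 3}. The single structural fact needed is the recursion $\frac{d}{dx}T_n(x)=A(x)\,T_{n-1}(x)$ for $n\ge 1$, an immediate consequence of the fundamental theorem of calculus applied to the outermost integral, together with $T_n(0)=0$ for $n\ge1$ and $T_0\equiv I$.

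Differentiating the series term by term and applying this recursion gives
\[\frac{d\vec{\psi}}{dx}=\sum_{n=1}^{\infty}A(x)\,T_{n-1}(x)\,\rho^n\vec{\psi}_0 = \rho A(x)\sum_{m=0}^{\infty}T_m(x)\,\rho^m\vec{\psi}_0 = \rho A(x)\,\vec{\psi},\]
after the index shift $m=n-1$ and factoring one power of $\rho$ to the left. Evaluating the series at $x=0$ leaves only the $n=0$ term, so $\vec{\psi}(0)=\vec{\psi}_0$ and the initial condition is matched as well.

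Because the statement asserts only a \emph{formal} solution, there is no analytic obstacle: term-by-term differentiation is legitimate at the level of formal power series in $\rho$ (equivalently, as an identity of the coefficient of each $\rho^n$), and no estimate on the growth of $T_n$ is required. Consequently the only genuine point to check is the recursion $\frac{d}{dx}T_n=A\,T_{n-1}$, which is where one must read the iterated integrals as path-ordered rather than as naive powers $\left(\int_0^x A\right)^n$; with the naive reading the derivative would acquire the spurious term $\left(\int_0^x A\right)A(x)$ and the identity would fail whenever the values of $A$ at different points do not commute. I therefore expect the interpretation of the iterated integrals to be the only subtle step, the remainder being routine index bookkeeping.
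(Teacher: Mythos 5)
Your proposal is correct, and it is in fact more complete than what the paper provides. The paper gives no proof of this theorem at all: the statement is presented as an immediate consequence of the Dyson expansion \eqref{Dyson 1} cited from \cite{Argeri:2014qva}, with $A$ replaced by $\rho A$ and the scalar powers of $\rho$ collected in front of the iterated integrals --- which is precisely your opening paragraph. Where you go beyond the paper is the self-contained verification: defining $T_n(x)$ as the nested iterated integral, deriving the recursion $\frac{d}{dx}T_n(x)=A(x)\,T_{n-1}(x)$ from the fundamental theorem of calculus, and checking the equation coefficient-by-coefficient in $\rho$, which is exactly the right meaning of ``formal series solution'' and disposes of any convergence question. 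You also correctly isolate the one genuine subtlety that the paper's notation glosses over: the displayed integrals in \eqref{Dyson 1} and \eqref{Dyson 3} all carry upper limit $x$ and must be read as path-ordered (nested) integrals rather than as naive powers $\bigl(\int_0^x A\bigr)^n$; under the naive reading, differentiation produces the spurious right-multiplied term $\bigl(\int_0^x A\bigr)A(x)$ and the identity fails whenever values of $A$ at distinct points do not commute. In short, you start from the same observation the paper implicitly relies on, but you supply the verification the paper delegates to the citation.
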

\begin{remark}
    Here we treat $\rho$ as a formal variable, or as a sufficiently small complex parameter, to ensure convergence of the series in some sense.
\end{remark}

\subsection{The fundamental matrix and gauge transformations} 
An ordinary differential equation of order $n$
\[y^{(n)} + A_{n-1}(x)y^{(n-1)} + \cdots + A_1(x)y^{(1)} + A_0(x)y = 0\]
has a basis of $n$ linearly independent solutions, under very mild conditions. 

\begin{definition}
Given a basis $y_1,y_2,\ldots,y_n$ of solutions, we define a fundamental matrix of solutions
\begin{equation}
W(x) = \begin{pmatrix}
y_1(x) & y_2(x) & \cdots & y_n(x) \\
y_1'(x) & y_2'(x) & \cdots & y_n'(x) \\
\vdots & \vdots & \ddots & \vdots \\
y_1^{(n-1)}(x) & y_2^{(n-1)}(x) & \cdots & y_n^{(n-1)}(x)
\end{pmatrix}
\end{equation}
satisfying
\[W^\prime(x)= A(x)W(x)\]
where $A(x)$ is as before.
\end{definition}

We will often talk about ``the'' fundamental matrix of a differential equation, even though $W(x)$ is only defined up to a (constant) change of basis. Non-constant changes of basis give solutions to a distinct, but equivalent differential equation, often referred to as a gauge transformation.

\begin{definition}
    Two differential equations
    \[\vec{\psi}^\prime = A_1(x)\vec{\psi},\quad \vec{\phi}^\prime = A_2(x)\vec{\phi}\]
    are called gauge equivalent if there exists an invertible matrix $Z(x)$ such that 
        \[A_1(x) = Z^\prime(x)Z^{-1}(x) + Z(x)A_2(x)Z^{-1}(x)\]
\end{definition}

Given two gauge equivalent differential equations, $\vec{\phi}$ is a solution to $\vec{\phi}^\prime = A_2(x)\vec{\phi}$ if and only if $\vec{\psi}=Z(x)\vec{\phi}$ is a solution to $\vec{\psi}^\prime = A_1(x)\vec{\psi}$. A particularly useful type of gauge transformation is given by the inverse of a fundamental matrix.

\begin{lemma}\label{lem:gauge-transformation}
    Assume that the differential equation
    \[\vec{\psi}^\prime = A(x)\vec{\psi}\]
    has a fundamental matrix of solutions $W(x)$ in an open set $U$. Then
    \[\vec{\psi}^\prime = A(x)\vec{\psi} + B(x)\vec{\psi},\quad\text{and}\quad \vec{\psi}^\prime = W^{-1}(x)B(x)W(x)\vec{\psi}\]
    are gauge equivalent in $U$.
\end{lemma}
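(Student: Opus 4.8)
The plan is to exhibit an explicit gauge transformation witnessing the equivalence, and the natural candidate is the fundamental matrix $W(x)$ itself. First I would pin down what must be checked. Writing $A_1(x) = A(x) + B(x)$ for the first equation and $A_2(x) = W^{-1}(x)B(x)W(x)$ for the second, the definition of gauge equivalence requires an invertible matrix $Z(x)$ on $U$ satisfying
\[A_1(x) = Z'(x)Z^{-1}(x) + Z(x)A_2(x)Z^{-1}(x).\]

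Next I would simply set $Z(x) = W(x)$, which is invertible throughout $U$ precisely because $W$ is a \emph{fundamental} matrix: its columns form a basis of solutions, so its Wronskian is nonvanishing on $U$. The verification then splits into two pieces. For the first term, since $W$ solves $W'(x) = A(x)W(x)$, I get $Z'Z^{-1} = W'W^{-1} = A(x)W(x)W^{-1}(x) = A(x)$. For the second term, the conjugation telescopes by associativity of matrix multiplication: $Z A_2 Z^{-1} = W\bigl(W^{-1}BW\bigr)W^{-1} = B(x)$. Adding the two contributions yields $A(x) + B(x) = A_1(x)$, which is exactly the required identity, so $Z = W$ realises the gauge equivalence.

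The argument reduces to a one-line computation once the correct $Z$ is chosen, so there is no genuine analytic obstacle; the only point demanding care is the invertibility of $W$ on all of $U$, and this is exactly where the hypothesis that $W$ is a fundamental matrix (rather than an arbitrary matrix solution) enters. To make the result transparent I would also record the consequence at the level of solutions, using the correspondence stated just before the lemma: if $\vec{\phi}$ solves the conjugated equation $\vec{\phi}' = A_2(x)\vec{\phi}$, then $\vec{\psi} = W(x)\vec{\phi}$ solves $\vec{\psi}' = A_1(x)\vec{\psi}$, since $\vec{\psi}' = W'\vec{\phi} + W\vec{\phi}' = A W\vec{\phi} + W W^{-1}B W\vec{\phi} = \bigl(A(x)+B(x)\bigr)\vec{\psi}$. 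This confirms that the gauge transformation $W(x)$ intertwines the two systems in both the matrix-coefficient and the solution formulations.
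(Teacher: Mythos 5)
Your proof is correct, and it verifies the lemma by a slightly different route than the paper. You take the defining identity of gauge equivalence at face value: setting $Z(x)=W(x)$, you check the coefficient identity $A_1 = Z'Z^{-1} + Z A_2 Z^{-1}$ directly, using $W'=AW$ to get $Z'Z^{-1}=A$ and associativity to collapse $W\bigl(W^{-1}BW\bigr)W^{-1}=B$. The paper instead works at the level of solutions: it takes a solution $\vec{\psi}$ of the perturbed system, differentiates $\vec{\phi}:=W^{-1}(x)\vec{\psi}$ (computing $(W^{-1})'$ from $(WW^{-1})'=0$), and shows $\vec{\phi}'=W^{-1}BW\vec{\phi}$. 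Both arguments use the same gauge transformation and the same two facts (invertibility of the fundamental matrix and $W'=AW$), so neither is deeper than the other; but your version has the advantage of matching the paper's stated \emph{definition} of gauge equivalence verbatim, whereas the paper's proof establishes the solution correspondence, which is the consequence actually used later (e.g.\ in deriving the Dyson-type series for $W_\rho$). Your closing paragraph, where you check that $\vec{\psi}=W\vec{\phi}$ solves the perturbed system, is essentially the paper's computation run in the reverse direction, so your proposal in effect contains the paper's argument as a corollary.
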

\begin{myproof}
    Let $\vec{\psi}$ be a solution to
    \[\vec{\psi}^\prime = A(x)\vec{\psi}+B(x)\vec{\psi}\]
    in $U$. Then consider $\vec{\phi}:=W^{-1}(x)\vec{\psi}$. As
    \[ 0 = \left(W(x)W^{-1}(x)\right)^\prime = W^\prime(x)W^{-1}(x) + W(x)(W^{-1}(x))^\prime\]
    we have that
    \begin{align*}
        \vec{\phi}^\prime &= W^{-1}(x)\vec{\psi}^\prime - W^{-1}(x)W^\prime(x)W^{-1}(x)\vec{\psi}\\
        &= W^{-1}(x)A(x)\vec{\psi} +W^{-1}(x)B(x)\vec{\psi} - W^{-1}(x)A(x)W(x)W^{-1}(x)\vec{\psi}\\
        &= W^{-1}(x)B(x)\vec{\psi}=W^{-1}(x)B(x)W(x)\vec{\phi}
    \end{align*}
\end{myproof}

\subsection{The monodromy group} 



In general, a linear differential equation with singularities does not have globally defined solutions. Instead, we obtain local solutions which can be glued together to obtain multivalued solutions with non-trivial monodromy around the singularities of the differential equation. As such, to every differential equation induces a representation of a fundamental group, called the monodromy representation.

\begin{definition}
Consider an $n\times n$ linear differential system
\begin{equation}
\frac{dY}{dx} = A(x) Y(x),
\end{equation}
where $A(x)$ is a meromorphic matrix-valued function with poles at $\{a_1, \dots, a_k\}$. Fixing a basis $\{Y_1,\ldots,Y_n\}$ of solutions, analytic continuation along closed loops in $X=\bbP^1\setminus\{a_1,\ldots,a_k\}$ defines a group homomorphism
\begin{equation}
\rho: \pi_1(X, x_0) \longrightarrow \mathrm{GL}_n(\mathbb{C}),
\end{equation}
called the \emph{monodromy representation}\cite{beukers1989monodromy} of the system. Its image
\[
\mathrm{Mon}(A) := \mathrm{Im}(\rho)
\]
is called the \emph{monodromy group}
\end{definition}

In the context of Fuchsian systems, all singularities are regular, and exponent matrices control the local behavior of solutions near each singular point. The monodromy matrices then reflect how these local solutions are patched together to form global ones\cite{tveiten2009deformation}. Understanding their variation under perturbation is a central theme of this work.

 \begin{remark}
 Given a monodromy representation $\rho: \pi_1(X) \to \mathrm{GL}_n(\mathbb{C})$, this is equivalent to the data of a local system $V$ on $X$. Then the first cohomology group  $H^1(X, \mathrm{End}(\mathcal{V}))$\cite{tveiten2009deformation} describes infinitesimal deformations of $\rho$ up to equivalence. It also appears naturally in deformation theory, gauge theory, and the study of flat connections.
 \end{remark}

\subsection{Teichmüller Space}
Teichmüller space\cite{2022} $\mathcal{T}(S)$ parametrises the space of complex structures on Riemann surfaces, and plays a central role in the deformation theory of Riemann surfaces. It is closely related to deformations of monodromy representations, and so provides the natural setting to interpret how analytic and non-holomorphic perturbations of differential equations induce variations in monodromy data.

\begin{definition}
Let $S$ be a topological surface of genus $g \geq 2$. A marked Riemann surface is a pair $(X,f)$ of a Riemann surface and 
\[f:S\to X\]
is an orientation-preserving homeomorphism. The \emph{Teichmüller space} $\mathcal{T}(S)$ is the space of equivalence classes of marked Riemann surfaces, defined as
\[
\mathcal{T}(S) := \left\{ (X, f)\right\}/ \sim
\]
where $(X_1, f_1) \sim (X_2, f_2)$ if there exists a biholomorphism $\phi: X_1 \to X_2$ such that $\phi \circ f_1$ is homotopic to $f_2$.
\end{definition}

If $S$ is compact, Teichm{\"u}ller space is in bijection with the set of injective representations
\[\pi_1(S)\to \PSL_2(\bbR)\]
with discrete image, up to conjugation.

This leads naturally to a notion of higher Teichm{\"u}ller theory \cite{thomas2020highercomplexstructureshigher}
in which we study the character variety
\[\Hom(\pi_1(S),G)/\sim\]
of maps to a semi-simple Lie group, up to conjugation. This is equivalent to the moduli space of flat $G$-connections on $S$ \cite{Ho2004}.

For families of linear differential equations over Riemann surfaces, the monodromy representation gives a homomorphism
\[
\pi_1(S \setminus \{p_i\}) \to \mathrm{GL}_n(\mathbb{C}),
\]
and its equivalence class (modulo conjugation) defines a point in the character variety.

\section{The general solution and monodromy group structure of deformed differential equations }

We define a deformation of the differential equation
\[y^{(n)} + A_{n-1}y^{(n-1)} + \cdots + A_1y^\prime + A_0y = 0\]
to be a differential equation of the form
\[y^{(n)} + A_{n-1}y^{(n-1)} + \cdots + A_1y^\prime + A_0y + \rho f(x)y = 0\]
for some sufficiently nice $f(x)$.
We can rewrite this as
    \begin{equation}
        \begin{pmatrix}
             
            y^{\prime}\\
            y''\\
            \vdots\\
            y^{n}
    
        \end{pmatrix} =\left(\begin{pmatrix}
         0 & 1 & 0&\cdots &0\\
            0 & 0 &  1& \cdots & 0\\
          
            \vdots & \vdots & \ddots & \vdots\\
            0 & \cdots & 0  & 0 & 1\\
             -A_{n-1} & -A_{n-2} & & \cdots & -A_0\\
        \end{pmatrix}+\begin{pmatrix}
         0& 0 & \cdots & 0\\
            0 & 0 & \cdots & 0\\
            0 & 0 &  \cdots &0\\
            \vdots & \vdots & \ddots & \vdots\\
            0 & \cdots & 0 & -\rho f(x)
        \end{pmatrix}\right) \begin{pmatrix}
                   y\\
            y^{\prime}\\
            \vdots\\
            y^{n-1}
     \end{pmatrix} 
     \label{eq 2.27}
    \end{equation}
    
In this section, the goal is to find a formal series solution of deformed differential equations and hence analyze the monodromy group of these deformations in special cases. 

We will derive the series solution using Dyson series, but a recursive approach using variation of parameters can also be used to explicitly determine a series solution, as explained in Appendix B.

More generally, we can consider deformations of the form
\begin{equation}
    \frac{d}{dx}\ \Vec{\psi} = A \Vec{\psi} + \rho B \Vec{\psi}
    \label{mat 1}
\end{equation}
A is the original coefficient matrix, and B is called the perturbation matrix with parameter ${\rho}$. 

 \subsection{Series solution to deformed differential equations}
In all that follows, we assume that $\frac{d}{dx}\ \Vec{\psi} = A \Vec{\psi}$ admits $n$ linearly independent solutions, so that the fundamental matrix is generically invertible.

\begin{theorem}
Under gauge transformation given by $Z=W^{-1}$, the deformed differential equation
\begin{equation}
    \frac{d}{dx}\ \Vec{\psi} = A \Vec{\psi} + \rho B \Vec{\psi}
\end{equation}
is equivalent to
\begin{equation}
    (Z\Vec{\psi})' = \rho ZBZ^{-1}(Z\Vec{\psi})
    \label{2.24}
    \end{equation}
\end{theorem}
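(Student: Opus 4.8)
The plan is to read this off from Lemma~\ref{lem:gauge-transformation} by specialising its perturbation term from $B$ to $\rho B$ and by naming the gauge matrix explicitly as $Z=W^{-1}$, where $W$ is a fundamental matrix of the unperturbed system $\vec{\psi}^\prime = A\vec{\psi}$. Since $\rho$ is a scalar it commutes with every matrix, so the conjugation in the lemma is unaffected: $W^{-1}(\rho B)W = \rho\,W^{-1}BW$. Equivalently, one can simply run the short direct computation below, which is the content of the lemma's proof with $B$ rescaled by $\rho$.

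First I would set $\vec{\phi} := Z\vec{\psi} = W^{-1}\vec{\psi}$ and differentiate. Using the product rule for the inverse, $(W^{-1})^\prime = -W^{-1}W^\prime W^{-1}$, together with the defining relation $W^\prime = AW$ of the fundamental matrix, gives $(W^{-1})^\prime = -W^{-1}A$. Hence
\[
\vec{\phi}^\prime = (W^{-1})^\prime\vec{\psi} + W^{-1}\vec{\psi}^\prime = -W^{-1}A\vec{\psi} + W^{-1}(A+\rho B)\vec{\psi} = \rho\,W^{-1}B\vec{\psi},
\]
the $A$-terms cancelling exactly because $W$ solves the unperturbed equation. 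Substituting back $\vec{\psi} = W\vec{\phi} = Z^{-1}\vec{\phi}$ then yields $\vec{\phi}^\prime = \rho\,W^{-1}BW\vec{\phi} = \rho\,ZBZ^{-1}\vec{\phi}$, since $Z=W^{-1}$ and $Z^{-1}=W$. This is precisely $(Z\vec{\psi})^\prime = \rho\,ZBZ^{-1}(Z\vec{\psi})$, and the steps are reversible because $W$ is invertible on $U$, establishing genuine gauge equivalence in the sense of the earlier definition.

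There is no real obstacle here: the only points requiring care are the sign in the derivative-of-inverse identity $(W^{-1})^\prime = -W^{-1}W^\prime W^{-1}$ and the bookkeeping distinction between $Z=W^{-1}$ and $Z^{-1}=W$ in the conjugation $ZBZ^{-1}=W^{-1}BW$. One should also record the standing hypothesis that $W$ is invertible on the open set $U$ (guaranteed by the assumption that the unperturbed system has $n$ independent solutions), since the gauge matrix $Z=W^{-1}$ must exist there for the statement even to make sense.
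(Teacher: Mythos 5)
Your proof is correct and follows essentially the same route as the paper's: both differentiate $Z\vec{\psi}$, invoke the fundamental-matrix relation $W^\prime = AW$ (equivalently $Z^\prime + ZA = 0$, which you obtain via the derivative-of-inverse formula), and watch the $A$-terms cancel to leave $\rho ZBZ^{-1}(Z\vec{\psi})$. Your additional observation that the statement is just Lemma~\ref{lem:gauge-transformation} with $B$ replaced by $\rho B$ is a valid shortcut the paper itself gestures at immediately after the theorem.
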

\begin{myproof}
For any $Z$, we have that
    \begin{equation}
        (Z\Vec{\psi})' = Z'\Vec{\psi} +Z \Vec{\psi}'
        \label{2.26}
    \end{equation}
The by using equation ${\eqref{mat 1}}$, the equation ${\eqref{2.26}}$ can be written as
\begin{equation}
    (Z\Vec{\psi})' = Z'\Vec{\psi} + ZA \Vec{\psi} + \rho ZB \Vec{\psi}
    \label{2.27}
\end{equation}
Assuming $Z$ is invertible, equation ${\eqref{2.27}}$ can be written as
\begin{equation}
     (Z\Vec{\psi})' = Z'Z^{-1}Z\Vec{\psi} + ZA Z(Z^{-1}\Vec{\psi}) + \rho ZBZ^{-1} (Z\Vec{\psi})   
     \label{2.28}
\end{equation}
Next, note that
\[\frac{dW}{dx}=AW\]
Letting $Z=\tilde{W}^{-1}$, we find
\[Z'W+ZAW=0\]
and hence $Z'+ZA=0$. Thus, if $Z=W^{-1}$, $Z\vec{\psi}$ satisfies
\[(Z\Vec{\psi})' = \rho ZBZ^{-1}(Z\Vec{\psi})\]
\end{myproof}

Using this gauge transformation, we can easily find a formal series solution to the deformed differential equation using Lemma \ref{lem:gauge-transformation}.

\begin{theorem}
    Let $W_\rho$ be a fundamental matrix of
    \[\frac{d}{dx}\ \Vec{\psi} = A \Vec{\psi}+\rho B\vec{\psi}.\]
    Then $W_\rho$ can be written as a Dyson-type series
    \begin{equation}
     W_{\rho} ={W} \left( I + \left( \int Z B Z^{-1} \right) \rho 
+ \left( \int Z B Z^{-1} \int Z B Z^{-1} \right) \rho^2 + \cdots \right)  
\label{Deformed Wronskian}
    \end{equation}
\end{theorem}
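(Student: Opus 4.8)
The plan is to combine the two results already in hand: the gauge-transformation theorem culminating in \eqref{2.24}, and the Dyson-series solution \eqref{Dyson 3}. The idea is to pass to the gauge in which the deformed system carries \emph{no} unperturbed term, solve that system by a Dyson expansion, and then gauge back. First I would invoke the preceding theorem with $Z = W^{-1}$, where $W$ is a fundamental matrix of the unperturbed system $\vec{\psi}\,' = A\vec{\psi}$. By that theorem the deformed equation $\vec{\psi}\,' = A\vec{\psi} + \rho B\vec{\psi}$ is gauge equivalent to
\[(Z\vec{\psi})' = \rho\, Z B Z^{-1}\,(Z\vec{\psi}),\]
which is exactly \eqref{2.24}. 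Writing $\vec{\phi} := Z\vec{\psi}$ and $\tilde{A} := Z B Z^{-1}$, this reads $\vec{\phi}\,' = \rho\,\tilde{A}\,\vec{\phi}$, precisely the form \eqref{Dyson 2} to which the Dyson-series theorem applies.

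Next I would apply that theorem to the $\vec{\phi}$-equation. It is stated for a single vector solution with initial vector $\vec{\psi}_0$, but the recursion is linear in the initial data and so acts column-by-column; taking the initial data to be the identity yields a \emph{matrix} solution. Concretely, the matrix
\[\tilde{W}_\rho = I + \left( \int Z B Z^{-1} \right)\rho + \left( \int Z B Z^{-1} \int Z B Z^{-1} \right)\rho^2 + \cdots\]
satisfies $\frac{d}{dx}\tilde{W}_\rho = \rho\, Z B Z^{-1}\,\tilde{W}_\rho$ with $\tilde{W}_\rho(x_0) = I$ at the chosen base point, and is therefore a fundamental matrix of the transformed system. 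As a formal power series in $\rho$ its constant term is $I$, so it is invertible in the ring of such series.

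Finally I would gauge back. By Lemma \ref{lem:gauge-transformation} and the equivalence recorded after the gauge-equivalence definition, $\vec{\phi} = Z\vec{\psi} = W^{-1}\vec{\psi}$ solves the transformed equation if and only if $\vec{\psi} = W\vec{\phi}$ solves the original deformed equation. Hence the fundamental matrices are related by $W_\rho = W\,\tilde{W}_\rho$; substituting the series for $\tilde{W}_\rho$ gives exactly \eqref{Deformed Wronskian}, and invertibility of $W_\rho$ follows from that of $W$ together with the invertible constant term of $\tilde{W}_\rho$. The one step demanding genuine care is the upgrade of the Dyson theorem from vectors to matrices, together with the correct normalisation of the integration base point: once \eqref{Dyson 3} is read as the matrix solution $\tilde{W}_\rho$ with $\tilde{W}_\rho(x_0) = I$, the remaining manipulations are the purely formal left-multiplication by $W$. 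Since the Dyson series is only a formal (possibly divergent) object, the identity is to be understood coefficient-wise in $\rho$, matching the convention of the preceding remark.
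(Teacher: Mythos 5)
Your proposal is correct and matches the paper's intended argument exactly: the paper states this theorem without an explicit proof, remarking only that it follows ``easily'' by combining the gauge transformation $Z=W^{-1}$ (equation \eqref{2.24}) with the Dyson series \eqref{Dyson 3}, which is precisely the reduction--expand--gauge-back chain you carry out. Your added care about upgrading the Dyson theorem from vector to matrix solutions and normalising the base point is a genuine (if minor) improvement in rigour over what the paper leaves implicit.
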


\subsection{Deformations of the monodromy}
In the case of a Fuchsian differential equation, we can consider how the monodromy group varies under the perturbation, as least first to order. We will investigate this in several cases, starting with an example.

\subsubsection{Trivial deformations of hypergeometric equations}
Consider a simple deformation of the hypergeometric differential equation
\[x(1-x)y^{\prime\prime} + (c-(a+b+1)x)y^\prime -aby = 0\]
where $a,b,c\in\bbC$ are taken to be generic complex parameters, given by
\begin{equation}
\Vec{\psi}^\prime =   \begin{pmatrix}
0 & 1\\
\frac{-(a + b + 1)x + c}{x(1-x)} & \frac{ab}{x(1-x)} 
    \end{pmatrix}\Vec{\psi} + \rho \begin{pmatrix}
 0 & 0\\
 \frac{1}{x(1-x)} & 0
\end{pmatrix} \Vec{\psi}
\end{equation}

Around $0$, a fundamental matrix of solutions to
\[\vec{\psi}^\prime = \begin{pmatrix}
1 & 0\\
\frac{-(a + b + 1)x + c}{x(1-x)} & \frac{ab}{x(1-x)} 
    \end{pmatrix}\vec{\psi}
    \]
is given by
\[W(x) = \begin{pmatrix} y_1(x) & y_2(x) \\ y_1^\prime(x) & y_2^\prime(x)\end{pmatrix}\]
where
\[y_1(x) = {}_2F_{1} \left( \begin{array}{c}
a, b \\
c
\end{array}; x \right)\]
and
\[y_2(x) = x^{1-c}{}_2F_1\left(\begin{array}{c}a-c+1, b-c+1\\ 2-c\end{array};x\right).\]

     From equation \eqref{Deformed Wronskian}, the perturbed solutions are given to  first-order by
     \[
         {W} + \rho {W}\int^x_0{W}^{-1}B{W}dt
     \]
     Therefore, for some non-zero $A$ 
     \begin{equation}
           W_\rho - {W} = A{W}\int^x_0(1-t)^{a+b-c}\begin{pmatrix}
               y_1y_2&y^2_2\\
               -y^2_1&-y_1y_1
           \end{pmatrix}dt
           \label{2.47}
     \end{equation}
    which is in turn equal to
     \begin{equation}
     {W}\int^x_0t^{c-1}\begin{pmatrix}
         t^{1-c}H_1 & t^{2-2c}H_2\\
         H_3  & t^{1-c}H_4
     \end{pmatrix}dt
     \label{2.48}
     \end{equation}
     for some holomorphic $H_1,\ldots,H_4$. After integrating, we obtain a matrix of the form
\[{W}\begin{pmatrix}
\Tilde{H_1} & x^{2-c}\Tilde{H_2}\\
    x^c\Tilde{H_3} & xH_4
\end{pmatrix}\]
for some holomorphic $\tilde{H}_1,\ldots,\tilde{H}_4$.
Thus, we find
\begin{equation}
\begin{cases}
    y_{1\rho} = y_1 + \rho(y'_1(x\Tilde{H_1}+1)+y'_2x^2\Tilde{H_3})+\mathcal{O}(\rho^2)\\
    y_{2\rho} = y_2+ \rho( y'_1x^{2-c}\Tilde{H_2}+y'_2x(\Tilde{H_4}+1))+\mathcal{O}(\rho^2)
\end{cases}
\label{equation 2.47}
\end{equation}
We see that, to first order in $\rho$, $y_{1\rho}$ is a holomorphic function function of $x$, while $y_{2\rho}$ has non-trivial monodromy around $0$. Explicitly, the monodromy matrix is
\begin{equation}
    \begin{pmatrix}
        1 & 0\\
        0 & e^{-2\pi i c}
    \end{pmatrix}
\end{equation}
which is exactly the monodromy of the (unperturbed) hypergeometric differential equation around $0$.\cite{molag2017monodromygeneralizedhypergeometricequation}.\\  

    Similarly, we can determine the monodromy around 1. 
In this case, the difference in equation \eqref{2.47} can be computed to be 
\[\int^x_0\begin{pmatrix}
         \mathcal{A}_1 & (1-t)^{c-a-b}\mathcal{A}_2\\
         (1-t)^{a+b-c}\mathcal{A}_3  & \mathcal{A}_4
     \end{pmatrix}dt\]
which integrates to
\[\begin{pmatrix}
    \Tilde{\mathcal{A}}_1 & (1-x)^{c-a-b+1}\Tilde{\mathcal{A}}_2\\
    (1-x)^{a+b-c+1}\Tilde{\mathcal{A}}_3 & \Tilde{\mathcal{A}}_4
\end{pmatrix}\]
for some holomorphic functions $\tilde{\mathcal{A}}_i$.As before, we can get the perturbed monodromy group around 1.
\begin{equation}
    M_{1\rho} =\begin{pmatrix}
        1 & 0\\
        0 & e^{2 \pi i(c-a-b)}
    \end{pmatrix}
\end{equation}   
Thus, this perturbation induces a trivial deformation of the monodromy of the hypergeometric differential equation.

\subsubsection{Non-trivial deformations}
In what follows, we fix a base Fuchsian differential equation $\vec{\psi}^\prime= A\vec{\psi}$, and consider how its monodromy varies under different types of perturbation.

For more complicated perturbation matrices $B$, we can obtain more interesting changes in the monodromy matrices. Here we will discuss a number of cases and derive a general formula for the first order deformation of the monodromy matrices.

As the computation is the same around every singularity, we will only compute the monodromy around $0$. For a multivalued function $F$, we will denote a fixed branch by $F(x)$, and denote by $F(xe^{2\pi i})$ the branch obtained by looping around $0$ once.

\begin{theorem}
For a generic perturbation matrix $B$, the monodromy matrix around $0$ is given by
\begin{equation}
       M_{0,\rho}= M_0+ \left(M_0 C(xe^{2 \pi i}) M_0^{-1} - C(x)\right)M_0\rho + \mathcal{O}(\rho^2)
    \label{equation2.53}
\end{equation}
\label{theorem 2.19}
where
\[
    C(x) = \int^x_0 {W}^{-1}B{W}dt
\]
and $M_0$ is the monodromy around $0$ of $\vec{\psi}^\prime= A\vec{\psi}$.
\end{theorem}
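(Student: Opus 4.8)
The plan is to extract the first-order behaviour of the perturbed fundamental matrix from the Dyson expansion and substitute it into the defining relation for the monodromy. Taking $Z = W^{-1}$ in \eqref{Deformed Wronskian}, the perturbed fundamental matrix is
\[
W_\rho = W\bigl(I + \rho\,C(x) + \mathcal{O}(\rho^2)\bigr),\qquad C(x) = \int_0^x W^{-1}BW\,dt,
\]
so that $W_\rho = W + \rho\,W C(x) + \mathcal{O}(\rho^2)$. Recall that the monodromy around $0$ of a fundamental matrix $\Phi$ is the constant matrix $M$ characterised by analytic continuation along a small positive loop, $\Phi(xe^{2\pi i}) = \Phi(x)M$, equivalently $M = \Phi(x)^{-1}\Phi(xe^{2\pi i})$; in particular $W(xe^{2\pi i}) = W(x)M_0$. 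I would then compute $M_{0,\rho} = W_\rho(x)^{-1}W_\rho(xe^{2\pi i})$ to first order in $\rho$.

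Continuing $W_\rho$ around the loop, the prefactor $W$ picks up $M_0$ while the primitive $C$ passes to its continued branch, giving
\[
W_\rho(xe^{2\pi i}) = W(x)M_0 + \rho\,W(x)M_0\,C(xe^{2\pi i}) + \mathcal{O}(\rho^2).
\]
Inverting the base point value, $W_\rho(x)^{-1} = \bigl(I - \rho\,C(x)\bigr)W(x)^{-1} + \mathcal{O}(\rho^2)$, I would form the product $M_{0,\rho} = W_\rho(x)^{-1}W_\rho(xe^{2\pi i})$; the factors $W(x)^{-1}W(x)$ cancel and, discarding $\mathcal{O}(\rho^2)$ terms, one is left with
\[
M_{0,\rho} = M_0 + \rho\bigl(M_0\,C(xe^{2\pi i}) - C(x)\,M_0\bigr) + \mathcal{O}(\rho^2).
\]
This is exactly \eqref{equation2.53}, since factoring $M_0$ out on the right gives $\bigl(M_0 C(xe^{2\pi i})M_0^{-1} - C(x)\bigr)M_0 = M_0 C(xe^{2\pi i}) - C(x)M_0$; this factored form is the one that exhibits the first-order correction as a cocycle for the conjugation action of $M_0$.

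The delicate step --- which I expect to be the main obstacle --- is making precise the continued branch $C(xe^{2\pi i})$. Since $B$ is single-valued near $0$ while $W$ acquires the factor $M_0$, the integrand transforms as $(W^{-1}BW)(xe^{2\pi i}) = M_0^{-1}(W^{-1}BW)(x)\,M_0$, so the continued primitive has derivative $M_0^{-1}(W^{-1}BW)(x)M_0$. This single transformation law does double duty: differentiating $M_0\,C(xe^{2\pi i}) - C(x)\,M_0$ and substituting it shows the derivative vanishes, confirming that the first-order correction is genuinely $x$-independent and hence a legitimate constant monodromy matrix; and it fixes the looped primitive unambiguously. The remaining care is analytic rather than algebraic: the lower endpoint $0$ is the singular point, so one must check convergence of the improper integral defining $C$, which is where the genericity hypothesis on $B$ (and on the exponents of $A$ at $0$) enters. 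Once these points are settled the computation above is routine.
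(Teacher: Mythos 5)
Your proposal is correct and follows essentially the same route as the paper's proof: expand $W_\rho = W\bigl(I+\rho C(x)\bigr)+\mathcal{O}(\rho^2)$ via the Dyson series, continue around the loop using $W(xe^{2\pi i})=W M_0$ to get $W_\rho(xe^{2\pi i})=WM_0\bigl(I+\rho C(xe^{2\pi i})\bigr)+\mathcal{O}(\rho^2)$, then solve $W_\rho M_{0,\rho}=W_\rho(xe^{2\pi i})$ by inverting $I+\rho C(x)$ to first order and factoring out $M_0$. Your additional remarks on the transformation law of the integrand (hence $x$-independence of the correction when $B$ is single-valued) and on convergence of the improper integral at $0$ go beyond what the paper records in this proof, but they match its later discussion of the meromorphic case and do not alter the argument.
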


\begin{myproof}
On one hand, computing the value of $W_\rho$ along a loop around $0$, we find 
\begin{equation}
    W_{\rho}(xe^{2 \pi i}) = {W}(xe^{2 \pi i})(I + \rho C(xe^{2 \pi i})) + \mathcal{O}(\rho^2) {W}M_0(I + \rho C(xe^{2 \pi i}))+\mathcal{O}(\rho^2)
    \label{equation 2.54}
\end{equation}
On the other hand, the deformation of the monodromy is defined by
 \[ {W}_\rho M_{0,\rho} =W_{\rho}(xe^{2 \pi i})\]
Comparing these, we find that, to first order,
\[{W}(I+\rho C(x))M_{0,\rho}= {W}M_0(I + \rho C(xe^{2 \pi i}))\]
and so 
\[M_{0,\rho}=(I+\rho C(x)+\mathcal{O}(\rho^2)))^{-1}M_0(I + \rho C(xe^{2 \pi i})+\mathcal{O}(\rho^2))\]
As 
\[(I+\rho C(x)+\mathcal{O}(\rho^2)))^{-1} = I-\rho C(x) +\mathcal{O}(\rho^2)\]
the result follows
 \end{myproof}
    \begin{remark}
        It will be convenient to introduce the notation 
        \[\Delta_a C(x) = M_a C(xe^{2\pi i})M_a^{-1} - C(x).\]
    \end{remark}

In all the following examples, we consider deformations of a differential equation
\[\vec{\psi}^\prime = A(x)\vec{\psi}\]
with fundamental matrix $W(x)$, and monodromy matrix $M_0$ around $0$. We also fix a matrix $H(x)$ with holomorphic entries.

\begin{example}[Branch-Cut deformation]
    Consider the perturbed system:
\[
\Vec{\psi}' = \left( A(x) + \rho\, x^\lambda H(x) \right) \ \vec{\psi},
\]

The correction to monodromy is determined by
\[
C(x) = \int^x_0 Z(t) B(t) Z^{-1}(t)\, dt = \int^x_0 Z(t)\, t^\lambda H(t)\, Z^{-1}(t)\, dx.
\]
Choosing appropriate branches of $W(x)$ and $x^\lambda H(x)$, we can compute $C(xe^{2\pi i})$:
\[C(xe^{2\pi i})= \int_0^x M_0^{-1}W^{-1}(t)B(t-)W(t)M_0 \, dt\]
and hence.
\[M_0C(xe^{2\pi i})M_0^{-1} = e^{2\pi i \lambda}\int_0^x t^\lambda W^{-1}(t)H(t)W(t)\, dt\]
Thus
\[\Delta_0C(x) = (e^{2\pi i \lambda}-1)C(x)\]
\end{example}

\begin{example}[Logarithmic deformation]
    Consider the perturbation:
\[
B(x) = \log(x) \cdot H(x)
\]
Then the Dyson correction term is:
\[
C(x) = \int^x_0 Z(t) B(t) Z^{-1}(t) dx = \int^x_0 Z(t) \log(t) H(t)Z^{-1} dt
\]

Choosing appropriate branches of $W(x)$ and $\log(t) H(x)$, we find
\[M_0C_a(x)M_0^{-1} = \int_0^x (\log(t+2\pi i)W^{-1}(t)H(t)W(t)\, dt\]
and so 
\[\Delta_0 C(x) = 2\pi i C(x).\]

\end{example}

These types of deformation do not preserve Fuchsian-ness of the differential equation, which is reflected in the $x$-dependency of $\Delta_0C(x)$. In contrast, for a meromorphic deformation, we obtain genuine constant monodromy matrices.

\begin{example}[Meromorphic deformations]

Consider the perturbed system:
\[
\Vec{\psi}^\prime = \left( A(x) + \rho\, \frac{1}{x^n} H(x) \right) \ \vec{\psi}.
\]

By Theorem \ref{theorem 2.19}, the monodromy around $0$ is given to first order by
\[
M_{0,\rho} =  \left( I + \rho \Delta_0C(x) \right)M_0\].

As $B(x)=\frac{1}{x^n}H(x)$ is meromorphic, by choosing appropriate branch cuts of $W(x)$, we find that
\begin{align*}
\left(M_0 C_a(x)M_0^{-1} - C(x)\right)^\prime & = M_0M_0^{-1}W^{-1}(x)B(x)W(x)M_0M_0^{-1}-W^{-1}(x)B(x)W(x)=0
\end{align*}
and so $\Delta_0C(x)$ is constant. In particular, if $C(0)$ is well defined, we must have $\Delta_0C(x)=0$, as in our hypergeometric example. Similarly, if $W^{-1}(x)B(x)W(x)$ is meromorphic, then $\Delta_0C(x)$ can be computed via residues.
\end{example}

\begin{remark}\label{rem:log-example}
    In general $\Delta_0C(x)\neq 0$. For example, consider the differential equation
    \[\vec{\psi}^\prime= \begin{pmatrix} 0 & 1\\ 0 & -z^{-1}\end{pmatrix}\vec{\psi}.\]
    One can easily compute that
    \[W(x)=\begin{pmatrix} 1 & \frac{1}{2\pi i}\log(z)\\ 0 & \frac{1}{2\pi i z}\end{pmatrix},\quad B(x)=\frac{1}{x}\begin{pmatrix}1 & 1\\ 0 & 1\end{pmatrix},\quad C(x)=\begin{pmatrix}\log(z) & \frac{-1}{2\pi iz}\\ 0 & \log(z)\end{pmatrix}\]
    and hence
    \[\Delta_0C(x) = \begin{pmatrix} 2\pi i & 0 \\ 0 & 2\pi i\end{pmatrix}\]
\end{remark}

\subsubsection{The cohomology theory of monodromy deformations}
From these deformations of monodromy, we easily obtain a representation of a classifying cocycle of the deformation, viewed as a deformation of a connection \cite{tveiten2009deformation}
\begin{theorem}[Cocycle from Gauge Jump]
Let \( S \) be a Riemann surface, and let
\[
\nabla^{(0)} = d + A(x)
\]
be a flat connection with monodromy representation \( \rho^{(0)} : \pi_1(S) \to \mathrm{GL}_n(\mathbb{C}) \). Consider a deformation
\[
\nabla^{(\rho)} = d + A(x) + \rho B(x)
\]
with meromorphic matrix-valued function \( B(x)\). Define
\[
C(x) := \int^x_0 {W}^{-1}(t) B(t) {W(t)} \, dt.
\]

Then the jump
\[
\Delta_a C := M_aC((x-a) e^{2\pi i}+a)M^{-1}_a - C(x)
\]
defines a 1-cocycle valued in the adjoint representation, and represents a class in the first cocohmology group:
\[
[\Delta_a C] \in H^1(S, \mathrm{ad}(\rho^{(0)})).
\]
\end{theorem}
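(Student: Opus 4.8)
The plan is to realize the assignment $\gamma \mapsto \Delta_\gamma C$ as a genuine group $1$-cocycle for $\pi_1(S)$ acting on $\mathfrak{gl}_n(\mathbb{C})$ through the adjoint of the unperturbed monodromy, and then to invoke the standard identification $H^1(S, \mathrm{ad}(\rho^{(0)})) \cong H^1(\pi_1(S), \mathfrak{gl}_n(\mathbb{C}))$, which is valid because the relevant punctured surface (the complement of the poles of $A$ and $B$) is a $K(\pi,1)$. Under this identification a $1$-cocycle is a map $u : \pi_1(S) \to \mathfrak{gl}_n(\mathbb{C})$ satisfying $u(\gamma\delta) = u(\gamma) + \Ad(\rho^{(0)}(\gamma))\,u(\delta)$, and two cocycles are cohomologous when they differ by $\gamma \mapsto \Ad(\rho^{(0)}(\gamma))v - v$ for some fixed $v$. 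First I would fix a base point and a branch of $W$, and for each $\gamma \in \pi_1(S)$ set $u(\gamma) := \Delta_\gamma C$, the gauge jump of $C$ along $\gamma$ in the sense of Theorem \ref{theorem 2.19}.

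The first point to nail down is that $u$ actually takes values in $\mathfrak{gl}_n(\mathbb{C})$, i.e.\ that $\Delta_\gamma C$ is a constant matrix and not a function of $x$. This is exactly where meromorphicity of $B$ enters: since $B$ is single-valued, continuation of the integrand $W^{-1}BW$ around $\gamma$ merely conjugates it by $M_\gamma$, so that differentiating $\Delta_\gamma C = M_\gamma\,(\gamma^{*}C)\,M_\gamma^{-1} - C$ gives $(\Delta_\gamma C)' = W^{-1}BW - W^{-1}BW = 0$ — the computation already performed for a single elementary loop in the meromorphic deformation example. Hence $\Delta_\gamma C$ is constant and depends only on the homotopy class of $\gamma$. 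By the Dyson expansion \eqref{Deformed Wronskian} together with Theorem \ref{theorem 2.19} (applied to $\gamma$ in place of the elementary loop around $0$), the first-order variation of monodromy is $M_\gamma(\rho) = M_\gamma + \rho\,\Delta_\gamma C\, M_\gamma + \mathcal{O}(\rho^2)$, so that $u(\gamma) = \Delta_\gamma C = \dot{M}_\gamma M_\gamma^{-1}$, where $\dot{M}_\gamma$ is the coefficient of $\rho$.

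With this identification the cocycle law becomes a formal consequence of the fact that, for each fixed $\rho$, the deformed monodromy $\gamma \mapsto M_\gamma(\rho)$ is a homomorphism $\pi_1(S) \to \mathrm{GL}_n(\mathbb{C})$, being the monodromy of the flat connection $\nabla^{(\rho)}$. Differentiating $M_{\gamma\delta}(\rho) = M_\gamma(\rho)M_\delta(\rho)$ at $\rho = 0$ yields $\dot{M}_{\gamma\delta} = \dot{M}_\gamma M_\delta + M_\gamma \dot{M}_\delta$, and right-multiplying by $M_{\gamma\delta}^{-1} = M_\delta^{-1}M_\gamma^{-1}$ gives $u(\gamma\delta) = \dot{M}_\gamma M_\gamma^{-1} + M_\gamma(\dot{M}_\delta M_\delta^{-1})M_\gamma^{-1} = u(\gamma) + \Ad(M_\gamma)\,u(\delta)$, which is precisely the cocycle condition. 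Finally I would verify well-definedness of the class: changing the base point or the lower limit of integration in $C$ shifts $C$ by a constant matrix $v$, and a direct substitution shows $\Delta_\gamma(C+v) = \Delta_\gamma C + (\Ad(M_\gamma)v - v)$, so the cocycle changes only by a coboundary. Therefore $[\Delta C]$ is a well-defined element of $H^1(S, \mathrm{ad}(\rho^{(0)}))$.

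The main obstacle is the second step rather than the formal algebra of the third: one must verify that the gauge-jump formula of Theorem \ref{theorem 2.19}, established there only for an elementary loop encircling a single singularity, genuinely computes $\dot{M}_\gamma M_\gamma^{-1}$ for an \emph{arbitrary} $\gamma \in \pi_1(S)$, and that analytic continuation of the path-ordered integral $C$ along a concatenation of loops composes in the manner demanded by the homomorphism argument. Making this precise requires careful bookkeeping of the branches and base points of $W$ and of $C$ under loop concatenation; once the relation $\dot{M}_\gamma = \Delta_\gamma C\, M_\gamma$ is known to be compatible with composition, the cocycle identity and the cohomological conclusion follow immediately.
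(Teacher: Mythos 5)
Your proposal is correct, and it rests on the same two pillars as the paper's own proof: Theorem \ref{theorem 2.19}, which identifies $\Delta_\gamma C$ with the first-order variation of the monodromy, and Goldman's principle that first-order deformations of a representation produce $\mathrm{ad}$-valued $1$-cocycles. The difference lies in how the cocycle identity is obtained. The paper cites Goldman for the general framework and then \emph{verifies} the identity $\delta(\gamma_a)-\delta(\gamma_a\gamma_b)+\mathrm{ad}_M(\gamma_a)\delta(\gamma_b)=0$ by directly expanding the analytic continuations of $C$ along composed loops, using the constancy of $\Delta_\bullet C$ for meromorphic $B$. You instead re-derive Goldman's lemma: writing $u(\gamma)=\Delta_\gamma C=\dot M_\gamma M_\gamma^{-1}$ and differentiating the homomorphism property $M_{\gamma\delta}(\rho)=M_\gamma(\rho)M_\delta(\rho)$ at $\rho=0$, so the cocycle law is pure Leibniz algebra and the continuation bookkeeping is confined to the single identification $u(\gamma)=\Delta_\gamma C$. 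Your route is more self-contained and also supplies two points the paper leaves implicit: that changing the base point (lower limit of the integral defining $C$) shifts the cocycle only by the coboundary $\Ad(M_\gamma)v-v$, so the cohomology class is well defined, and the identification of $H^1(S,\mathrm{ad}(\rho^{(0)}))$ with group cohomology of $\pi_1(S)$ via the $K(\pi,1)$ property. Conversely, the paper's direct expansion makes the dependence on branches of $C$ completely explicit, which is where the actual analytic content sits. Finally, the obstacle you flag at the end is smaller than you suggest: $\gamma\mapsto M_\gamma(\rho)$ is \emph{automatically} a homomorphism into $\GL{n}(\bbC[[\rho]])$, being the monodromy of the formal fundamental matrix $W_\rho$, and the computation proving Theorem \ref{theorem 2.19} is insensitive to which loop is used --- replace $M_0$ by $M_\gamma$ and $C(xe^{2\pi i})$ by the continuation of $C$ along $\gamma$ --- so compatibility with composition requires no further argument beyond the constancy of each $\Delta_\gamma C$, which you have already established.
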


\begin{myproof}
Following \cite{goldman1988deformation}, for every deformation
\[M_\rho:\Gamma\to \GL[n](\bbC[[\rho]])\]
of the monodromy representation $M:\Gamma\to \GL[n](\bbC)$, there exists a map $X:\Gamma\to \rho\mathfrak{gl}_n[[\rho]]$ such that
\[M_{a,\rho} = \exp(X(\gamma_a))M_a\]
where we denote by $\gamma_a$ the loop around $a$. Furthermore, writing
\[X(\gamma) = \delta(\gamma) \rho + \cdots\]
the map $\delta$ satisfies
\[\delta(\gamma) - \delta(\gamma\eta) + \mathrm{ad}_{M}(\gamma)\delta(\eta)=0\]
giving an element of cohomology 
\[\delta\in H^1(\bbP^1\setminus\{0,1,\infty\},\mathrm{ad}_M).\]
Theorem \ref{theorem 2.19} tells us that the function
\[\delta(\gamma_a) = \Delta_a C\]
gives such a cocycle. Recall that this is independent of $x$ if $B(x)$ is meromorphic. Indeed, we have that
\begin{align*}
    &{}\delta(\gamma_a) - \delta(\gamma_a\gamma_b) + \mathrm{ad}_M(\gamma_a)\delta(\gamma_b)\\
    =&{} M_aC(\gamma_a\cdot x)M_a^{-1} - C(x) - M_aM_bC(\gamma_a\gamma_b\cdot x)M_{b}^{-1}M_{a}^{-1} +C(x)\\
    &+ M_aM_b(C(\gamma_a\gamma_b\cdot x)M_b^{-1}M_a^{-1} - M_aC(\gamma_a\cdot x)M_a^{-1} = 0
\end{align*}
\end{myproof}

The space of such cocycles can be identified with the tangent space of the character variety $\Hom(\Gamma,\GL{n}(\bbC))/\sim$ \cite{goldman1988deformation}. In particular, if a deformation of a differential equation has a monodromy cocycle $\Delta_\bullet C:\Gamma\to \SL{2}(\bbZ)$, then we can identify the corresponding cocycle with a tangent vector to Teichm{\"u}ller space and hence a deformation of the complex structure.

\begin{corollary}[Teichmüller Tangent Vector from Monodromy Deformation]
Let \( \nabla^{(\rho)} = d + A(x) + \rho B(x) \) be a meromorphic deformation of a flat connection over a punctured Riemann surface \( S \), and define the gauge correction term:
\[
C(x) := \int^x_0 {W}^{-1}(t) B(t) {W}(t) \, dt, \quad \Delta_a C := C((x-a)e^{2\pi i}+a) - C(x),
\]
where \( {W}(x)^{-1} \) is the inverse fundamental matrix solution of the unperturbed system.

If $\Delta_a C\in \SL[2](\mathbb{R})$, then it can be identified with a tangent vector in Teichm{\"u}ller space, and hence induces a deformation of the complex structure of $S$.
\end{corollary}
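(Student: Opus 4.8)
The plan is to derive the corollary from the preceding cocycle theorem together with the classical Weil--Goldman identification of the tangent space of a character variety with a first group-cohomology space, and then to restrict to the real Fuchsian locus which constitutes Teichmüller space. Throughout I assume, as is implicit in the statement, that the unperturbed monodromy representation $\rho^{(0)}:\Ga\to\PSL_2(\bbR)\subset\PSL_2(\bbC)$ is Fuchsian, that is, discrete and faithful, so that its conjugacy class $[\rho^{(0)}]$ determines a genuine point of $\cT(S)$ sitting inside the character variety $\Hom(\Ga,\PSL_2(\bbR))/\sim$. I also read the hypothesis at the Lie-algebra level, as befits a tangent vector: $\Delta_a C$ is by construction an adjoint-valued jump, so the condition is that it take values in the real form $\mathfrak{sl}_2(\bbR)$.

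First I would recall from the preceding theorem that the assignment $\gamma_a\mapsto\Delta_a C$ is a $1$-cocycle valued in $\mathrm{ad}(\rho^{(0)})$, and hence determines a well-defined class $[\Delta_\bullet C]\in H^1(\Ga,\mathrm{ad}(\rho^{(0)}))$; meromorphicity of $B$ is what guarantees the $x$-independence needed for this to be a bona fide group cocycle rather than an $x$-dependent family. Next I would invoke Weil's computation, in the form used by Goldman \cite{goldman1988deformation}, identifying the Zariski tangent space to the character variety at $[\rho^{(0)}]$ with exactly this cohomology group,
\[
T_{[\rho^{(0)}]}\bigl(\Hom(\Ga,G)/\sim\bigr)\;\cong\;H^1\bigl(\Ga,\mathrm{ad}(\rho^{(0)})\bigr).
\]
The content of the hypothesis $\Delta_a C\in\mathfrak{sl}_2(\bbR)$ is then that the cocycle lies in the image of the inclusion $H^1(\Ga,\mathfrak{sl}_2(\bbR))\hookrightarrow H^1(\Ga,\mathfrak{sl}_2(\bbC))$. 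By the Fricke description of Teichmüller space, this real subspace is precisely the tangent space to the Fuchsian component, i.e.\ to $\cT(S)$ itself, so $[\Delta_\bullet C]$ is an honest tangent vector of $\cT(S)$ at $[\rho^{(0)}]$. Finally, via the uniformization bijection between $\cT(S)$ and the space of complex structures on $S$ recalled in the preliminaries, together with the Kodaira--Spencer description of its tangent space by harmonic Beltrami differentials, this tangent vector is the same datum as a first-order deformation of the complex structure, which is the desired conclusion.

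The main obstacle is the reality matching in the penultimate step: one must argue carefully that valuing the cocycle in $\mathfrak{sl}_2(\bbR)$ cuts out exactly the directions tangent to the Fuchsian locus, rather than complex directions that push $\rho^{(0)}$ out of $\PSL_2(\bbR)$. This rests on knowing that $\cT(S)$ is an open connected component of the real character variety, on which the Weil identification restricts to an isomorphism onto $H^1(\Ga,\mathfrak{sl}_2(\bbR))$, and that $[\Delta_\bullet C]$ is non-zero (not a coboundary) precisely when the deformation is non-trivial. A secondary point needing care is the passage to a punctured surface: since $S=\bbP^1\setminus\{a_1,\dots,a_k\}$ is non-compact, one must work with the parabolic cohomology governing deformations with prescribed conjugacy classes at the punctures, and verify that the puncture contributions of $\Delta_a C$ respect this constraint before the identification with $T\cT(S)$ applies.
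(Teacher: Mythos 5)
Your proposal is correct and takes essentially the same route as the paper, which states this corollary without a separate proof, deriving it from the preceding cocycle theorem together with Goldman's identification of the space of $\mathrm{ad}$-valued $1$-cocycles with the tangent space of the character variety $\Hom(\Gamma,\mathrm{GL}_n(\mathbb{C}))/\sim$, exactly as you do. Your write-up is in fact more careful than the paper's: you read the hypothesis at the Lie-algebra level ($\Delta_a C\in\mathfrak{sl}_2(\mathbb{R})$, where the paper writes the group $\mathrm{SL}_2(\mathbb{R})$), you make explicit the Fuchsian (discrete and faithful, $\mathrm{PSL}_2(\mathbb{R})$-valued) assumption on $\rho^{(0)}$ without which the base point does not lie in Teichm\"uller space at all, and you flag the parabolic-cohomology constraint at the punctures — none of which the paper addresses.
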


\begin{remark}
    This cocycle encodes an infinitesimal deformation of the monodromy representation, but we construct a full formal deformation of the fundamental matrix and one could ask about the formal deformation theory. As $S=\bbP^1\setminus\{a_1,\ldots,a_n\}$ is a Stein manifold, it will have vanishing second cohomology $H^2(S,\mathrm{ad}_M)=0$, and so every infinitesimal deformation can be extended to a formal deformation. However, computing these extensions is beyond the scope of this article.
\end{remark}

\section{Variation of hypergeometric eigenvalues}

In this section, we consider the second-order hypergeometric equation \eqref{eq1.3}. However, we rewrite it by using the differential operator
\begin{equation}
      x(1 - x) y'' + [c - (a + b + 1)x] y' - ab y =(\widetilde{L}-ab)y=0
      \label{3.1}
\end{equation}
where
    \begin{equation}
        \widetilde{L}= x(1-x)\frac{d^2}{dx^2}+[c - (a + b + 1)x]\frac{d}{dx}
    \end{equation}

A solution to the hypergeometric differential equation then gives an eigenfunction of $\widetilde{L}$
\begin{equation}
    \widetilde{L}y = aby 
    \label{3.3}
\end{equation}
with eigenvalue $ab$.

If we deform $\widetilde{L}$ by a continuous function, we can consider how this eigenvalue varies:

\begin{equation}
[\widetilde{L}+\rho f(x)]\Tilde{y}=\Tilde{\lambda}_\rho\Tilde{y}]
\label{3.4}
\end{equation}
Expanding both $\tilde{\lambda}_\rho$ and $\tilde{y}$ as formal series in $\rho$, we can use Theorem \eqref{2.15} to recursively solve for both $\tilde{\lambda}_\rho$ and $\tilde{y}$.

Inspired by perturbation theory in quantum mechanics \cite{griffiths2018introduction}, in this section we explore the corrections to the eigenvalues, which we believe to be original to the author.

We first note an orthogonality property for solutions to the hypergeometric equation \cite{riley2006mathematical}.

\begin{lemma}\label{ortho}
The solutions
   \[y_1(x)= {}_2F_1\left(\begin{array}{c}a,b\\ c\end{array};x\right), \quad \text{and}\quad y_2(x)=x^{1-c}{}_2F_1\left(\begin{array}{c}a-c+1, b-c+1\\ 2-c\end{array};x\right)\]
   are orthonormal with respect to the inner product
   \[\langle  f,g\rangle_\omega = \int_0^1 f(x)\bar{g}(x)\omega(x)\, dx\]
   where
    \[\omega(x)=x^{c-1}(1-x)^{a+b-c}\]
    \end{lemma}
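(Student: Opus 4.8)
The plan is to recognise $\widetilde{L}$ as a Sturm--Liouville operator and to exploit its formal self-adjointness with respect to $\langle\cdot,\cdot\rangle_\omega$. Writing $\widetilde{L}y = p_2 y'' + p_1 y'$ with $p_2(x) = x(1-x)$ and $p_1(x) = c-(a+b+1)x$, I would first find the integrating factor $P(x)$ putting $\widetilde L$ in divergence form, i.e. the solution of $P'/P = p_1/p_2$, and set $\omega = P/p_2$. A partial-fraction expansion gives
\[
\frac{p_1}{p_2} = \frac{c}{x} + \frac{c-a-b-1}{1-x},
\]
which integrates to $P(x) = x^{c}(1-x)^{a+b+1-c}$, and hence $\omega(x) = P/p_2 = x^{c-1}(1-x)^{a+b-c}$, exactly the stated weight. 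This confirms $\widetilde{L}y = \omega^{-1}(Py')'$, so that Lagrange's identity yields
\[
\int_0^1 (\widetilde{L}u)\,\bar v\,\omega\,dx - \int_0^1 u\,\overline{\widetilde{L}v}\,\omega\,dx = \bigl[P(u'\bar v - u\bar v')\bigr]_0^1 .
\]
This part is entirely routine.

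Second, I would apply this identity with $u = y_1$, $v = y_2$, aiming to force the pairing to vanish through the boundary (concomitant) term. Here the relevant data are the Frobenius exponents: at $x=0$ they are $0$ and $1-c$ (so $y_1 \sim 1$ and $y_2 \sim x^{1-c}$), at $x=1$ they are $0$ and $c-a-b$, while $P \sim x^{c}$ near $0$ and $P\sim (1-x)^{a+b+1-c}$ near $1$. I would check vanishing of $P(y_1'\bar y_2 - y_1\bar y_2')$ at each endpoint in the parameter range $\Re c > 0$ and $\Re(c-a-b) > 0$ (together with the reflected conditions), which are precisely the conditions guaranteeing that $\int_0^1 |y_i|^2\omega\,dx$ converges in the first place. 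The normalisation claim would then require computing $\|y_i\|_\omega^2$ from the endpoint asymptotics via Euler/beta-type integrals and rescaling so that the constants equal $1$.

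The step I expect to be the genuine obstacle is the orthogonality itself. Both $y_1$ and $y_2$ solve the \emph{same} equation $\widetilde L y = ab\,y$, so they share the eigenvalue $ab$; the usual ``distinct eigenvalues $\Rightarrow$ orthogonal'' mechanism degenerates into the tautology $(ab-ab)\langle y_1,y_2\rangle_\omega = 0$ and gives no information. Any orthogonality must therefore be extracted from the boundary term alone, yet by Abel's identity $P(y_1 y_2' - y_1' y_2)$ is a \emph{nonzero} constant for generic parameters, so the concomitant does not vanish for free. Consequently I would fall back on a direct evaluation of
\[
\int_0^1 (1-x)^{a+b-c}\;{}_2F_1(a,b;c;x)\,{}_2F_1(a-c+1,b-c+1;2-c;x)\,dx
\]
using a product-of-${}_2F_1$ / Euler integral identity, and I anticipate that a clean vanishing will force additional hypotheses on $(a,b,c)$. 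A further subtlety to resolve is the complex conjugation in $\langle\cdot,\cdot\rangle_\omega$: for genuinely complex parameters $\overline{y_2}$ carries $\bar a,\bar b,\bar c$, so $\widetilde L$ is self-adjoint only for real parameters, and the statement should likely be read either for real $a,b,c$ or up to the normalising constants.
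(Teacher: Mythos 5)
There is an important asymmetry to flag before anything else: the paper contains \emph{no proof of this lemma at all}. It is stated bare, justified only by a citation to a general methods textbook, and then used downstream (in the eigenvalue-correction theorem immediately following, where $\langle y_1,y_1\rangle_\omega$ is silently set to $1$, and in the boundedness proposition). So there is no paper proof to compare yours against, and the real question is whether the statement is provable at all. Your Sturm--Liouville setup settles the routine half correctly: the integrating-factor computation $P(x)=x^{c}(1-x)^{a+b+1-c}$ and $\omega=P/p_2=x^{c-1}(1-x)^{a+b-c}$ is right, and it identifies where the paper's weight comes from. More importantly, your diagnosis of the obstruction is exactly the correct one: $y_1$ and $y_2$ are eigenfunctions of $\widetilde L$ for the \emph{same} eigenvalue $ab$, so the Sturm--Liouville mechanism degenerates to the tautology $(ab-ab)\langle y_1,y_2\rangle_\omega=0$, while by Abel's identity $P\,(y_1'y_2-y_1y_2')$ is a nonzero constant, so the concomitant term carries no information either.

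Where you stop --- ``I anticipate that a clean vanishing will force additional hypotheses on $(a,b,c)$'' --- you could in fact go further: your direct-evaluation fallback disproves the lemma as stated. The integrand of $\langle y_1,y_2\rangle_\omega$ is
\[
y_1(x)\,y_2(x)\,\omega(x)=(1-x)^{a+b-c}\;{}_2F_1(a,b;c;x)\;{}_2F_1(a-c+1,\,b-c+1;\,2-c;\,x),
\]
and whenever the real parameters satisfy $a,\,b,\,c,\,a-c+1,\,b-c+1,\,2-c>0$ both hypergeometric series have strictly positive coefficients, so the integrand is strictly positive on $(0,1)$ and the inner product cannot vanish. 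Concretely, for $a=b=\tfrac14$, $c=\tfrac12$ one has $\omega(x)=x^{-1/2}$, the integral converges (the factors grow only logarithmically at $x=1$), and $\langle y_1,y_2\rangle_\omega>0$; moreover $|y_1|\ge 1$ on $(0,1)$ gives $\|y_1\|_\omega^2\ge\int_0^1 x^{-1/2}\,dx=2$, so the normalization claim fails as well. Your closing caveat about conjugation is also well taken (for non-real parameters $\widetilde L$ is not even formally symmetric for this pairing), but it is secondary. The constructive upshot of your analysis is this: what \emph{is} true, and what your Lagrange-identity computation proves under the stated exponent conditions, is formal self-adjointness of $\widetilde D=\widetilde L-ab$ with respect to $\langle\cdot,\cdot\rangle_\omega$; that is also the only ingredient the paper's eigenvalue computation actually needs, provided its conclusion is corrected to $\lambda_1=\langle f y_1,y_1\rangle_\omega/\|y_1\|_\omega^2$. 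In short, the failure you ran into is not a gap in your argument but a genuine error in the paper's lemma.
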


With this in mind, we can compute the following.
\begin{theorem}
    If $\tilde{y}(x)=y_1(x)+\mathcal{O}(\rho)$ is an eigenfunction of $\widetilde{L}$ with eigenvalue $\tilde{\lambda}_\rho=ab + \lambda_1\rho+\mathcal{O}(\rho)^2$, then
    \begin{equation}
\lambda_1 = \int^a_b|y_1|^2\omega (x)f(x)dx
\label{3.5}
    \end{equation}
\end{theorem}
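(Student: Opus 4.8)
The plan is to run the Rayleigh--Schr\"odinger argument of first-order perturbation theory, with the weighted pairing $\langle\cdot,\cdot\rangle_\omega$ of Lemma~\ref{ortho} playing the role of the physical inner product. First I would expand the perturbed eigenpair as formal series in $\rho$,
\[
\tilde y = y_1 + \rho\,y^{(1)} + \mathcal{O}(\rho^2),\qquad
\tilde\lambda_\rho = ab + \rho\,\lambda_1 + \mathcal{O}(\rho^2),
\]
substitute these into $[\widetilde L + \rho f]\tilde y = \tilde\lambda_\rho\tilde y$, and collect powers of $\rho$. The coefficient of $\rho^0$ merely reproduces the unperturbed relation $\widetilde L y_1 = ab\,y_1$ from \eqref{3.3}, while the coefficient of $\rho^1$ gives the inhomogeneous equation
\[
(\widetilde L - ab)\,y^{(1)} = \lambda_1\,y_1 - f\,y_1.
\]

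Next I would pair both sides of this first-order equation with $y_1$ in $\langle\cdot,\cdot\rangle_\omega$. The crucial structural input is that $\widetilde L$ is self-adjoint for this weighted pairing: writing $\omega(x)\,\widetilde L y = \frac{d}{dx}\big[p(x)\,y'\big]$ with $p(x)=x^{c}(1-x)^{a+b+1-c}$ exhibits $\widetilde L$ in Sturm--Liouville form, which is exactly the form for which $\omega$ is the natural weight. Granting self-adjointness, the left-hand pairing collapses,
\[
\langle(\widetilde L - ab)y^{(1)},\,y_1\rangle_\omega
 = \langle y^{(1)},\,(\widetilde L - ab)y_1\rangle_\omega = 0,
\]
since $y_1$ is an eigenfunction with eigenvalue $ab$. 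Feeding in the normalization $\langle y_1,y_1\rangle_\omega = 1$ from Lemma~\ref{ortho} on the right-hand side then yields $0 = \lambda_1 - \langle f y_1, y_1\rangle_\omega$, that is $\lambda_1 = \int_0^1 |y_1(x)|^2\,\omega(x)\,f(x)\,dx$, which is \eqref{3.5} (the limits $\int_b^a$ displayed there should read $\int_0^1$ to match the pairing of Lemma~\ref{ortho}).

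The main obstacle is justifying the self-adjointness step, which splits into two issues. The Sturm--Liouville reduction itself is routine: solving $\tfrac{1}{P}\exp(\int Q/P)=\omega$ for $P=x(1-x)$ and $Q=c-(a+b+1)x$ via partial fractions recovers precisely $\omega(x)=x^{c-1}(1-x)^{a+b-c}$. The delicate part is controlling the boundary terms produced by integrating by parts on $[0,1]$; these are governed by $p(x)=x^{c}(1-x)^{a+b+1-c}$, which vanishes at the endpoints only when $\Re(c)>0$ and $\Re(a+b+1-c)>0$, so the symmetry $\langle \widetilde L u,v\rangle_\omega = \langle u,\widetilde L v\rangle_\omega$ holds only under such genericity hypotheses on $a,b,c$. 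A further subtlety is that, since $a,b,c$ are complex, $\widetilde L$ is formally self-adjoint for the \emph{bilinear} pairing rather than Hermitian-self-adjoint: with the conjugate-linear $\langle\cdot,\cdot\rangle_\omega$ the cancellation above produces a factor $(\overline{ab}-ab)$, which vanishes only when $ab$ is real. One should therefore either restrict to real parameters or reinterpret the pairing so that the conjugation does not obstruct the cancellation. Once these endpoint and conjugation conditions are pinned down, the remaining algebra is immediate.
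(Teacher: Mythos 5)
Your proposal follows essentially the same route as the paper's proof: expand the eigenpair in powers of $\rho$, isolate the first-order equation $(\widetilde{L}-ab)\,y^{(1)}=\lambda_1 y_1-f y_1$, pair with $y_1$ in $\langle\cdot,\cdot\rangle_\omega$, kill the left-hand side by self-adjointness of $\widetilde{L}$, and invoke the normalization from Lemma~\ref{ortho}. The only difference is one of care rather than of method: the paper dispatches the self-adjointness step with a one-line appeal to integration by parts, whereas you correctly make explicit the Sturm--Liouville form, the endpoint conditions on $p(x)=x^{c}(1-x)^{a+b+1-c}$, and the complex-conjugation subtlety (as well as the typo $\int_b^a$ versus $\int_0^1$) that this step genuinely requires.
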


\begin{myproof}
From the equation \eqref{3.4}
    \[[\widetilde{L}+\rho f(x)]\Tilde{y}=\Tilde{\lambda}\Tilde{y}\]
and so
   \[[\widetilde{L}+\rho f(x)]\Tilde{y}=(ab+\rho \lambda_1+\mathcal{O}(\rho^2))\Tilde{y}.\] 
Expanding $\tilde{y}$ as a series
\[\tilde{y}=y_{1}+\rho y_{1,1} +\rho^2y_{1,2}+\mathcal{O}(\rho^3)\]
we can compare coefficients of $\rho$ to find
\[\begin{cases}
    \widetilde{L}y_1 = ab y_1\\
    \rho \widetilde{L}y_{1,1}+\rho f(x)y_1 = ab\rho y_{1,1} +\rho\lambda_1 y_1\\
    \vdots
\end{cases}\]
In particular,
\begin{equation}
     \widetilde{L}y_{1,1}+ f(x)y_1 =  ab y_{1,1}+\lambda_1 y_1
     \label{3.6}
\end{equation}
The equation \eqref{3.6} can be rewritten as 
\[\widetilde{L}y_{1,1}-aby_{1,1}=\lambda_1y_1-f(x)y_1\]
We then compute the inner product with ${y_1}$
\begin{equation}
<\widetilde{D}y_{1,1},y_1>_{\omega} = \lambda_1<y_1,y_1>_\omega-<f(x)y_1,y_1>_\omega
\label{3.7}
\end{equation}
where ${\widetilde{D}}=\widetilde{L}-ab$.

We first discuss the left-hand side of the equation \eqref{3.7}. Via integration by parts, one can show
\[<\widetilde{D}y_{1,1},y_1>_{\omega} = <y_{1,1},\widetilde{D}y_1> =0\]
as \(\widetilde{D}y_1=0\) by definition.

The result then follows from Lemma \ref{ortho}
\end{myproof}

From the equation \eqref{3.5}, we get a linear functional $f(x)\mapsto \lambda_1$, given by integration against the density function
\[\rho(x)=|y_1|^2\omega(x)\]

\begin{proposition}[Boundedness under $L^2(\omega)$ normalization]\label{prop:bounded}
Assume that $\langle f,f\rangle_\omega=1$. Then
\[
\lambda_1 \le \Big(\int_D \frac{\rho(x)^2}{\omega(x)}\,dx\Big)^{1/2}.
\]
with equality if and only if $f(x)=C|y_1(x)|^2$ for some constant $C$.
\end{proposition}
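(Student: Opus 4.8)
The plan is to recognise that the first-order correction computed in the preceding theorem is nothing but an $L^2(\omega)$ inner product, after which the bound is a single application of Cauchy--Schwarz. First I would rewrite the functional $f\mapsto\lambda_1$ as a pairing against a fixed vector. Since $\rho(x)=|y_1(x)|^2\omega(x)$ and $|y_1|^2$ is real (hence self-conjugate), I have
\[
\lambda_1 = \int_D |y_1|^2\,\omega(x)\,f(x)\,dx = \int_D f(x)\,\overline{|y_1(x)|^2}\,\omega(x)\,dx = \langle f, |y_1|^2\rangle_\omega,
\]
where $D=[0,1]$ is the interval underlying the inner product of Lemma \ref{ortho}. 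Thus $\lambda_1$ is simply the component of $f$ along $|y_1|^2$ in the weighted space $L^2(\omega)$.

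With this identification the inequality is immediate. Writing $\|g\|_\omega := \langle g,g\rangle_\omega^{1/2}$, Cauchy--Schwarz in $L^2(\omega)$ gives
\[
|\lambda_1| = \big|\langle f, |y_1|^2\rangle_\omega\big| \le \|f\|_\omega\,\big\||y_1|^2\big\|_\omega = \big\||y_1|^2\big\|_\omega,
\]
the last equality using the normalisation $\langle f,f\rangle_\omega=\|f\|_\omega^2=1$. It then remains only to rewrite the norm of the extremal vector in the stated form,
\[
\big\||y_1|^2\big\|_\omega^2 = \int_D |y_1|^4\,\omega(x)\,dx = \int_D \frac{\big(|y_1|^2\omega\big)^2}{\omega}\,dx = \int_D \frac{\rho(x)^2}{\omega(x)}\,dx,
\]
which yields the claimed bound after taking square roots. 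For the equality clause I would invoke sharpness of Cauchy--Schwarz: equality forces $f$ and $|y_1|^2$ to be linearly dependent in $L^2(\omega)$, i.e. $f(x)=C|y_1(x)|^2$; the normalisation then pins down $C$ (up to sign, with $C>0$ needed for equality in the signed inequality rather than its absolute-value form). I would also remark that the displayed inequality is to be read for real-valued $f$, or with $\lambda_1$ replaced by $|\lambda_1|$, since for complex $f$ the quantity $\lambda_1$ is a priori complex while the right-hand side is real; for the physically relevant real perturbations this distinction is vacuous.

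The only genuine point requiring care --- the main, if modest, obstacle --- is integrability: the bound is meaningful, and the extremal $f=C|y_1|^2$ actually lies in $L^2(\omega)$, precisely when $\big\||y_1|^2\big\|_\omega<\infty$, i.e. $\int_D |y_1|^4\,\omega\,dx<\infty$. Since $y_1={}_2F_1(a,b;c;x)$ is holomorphic with $y_1(0)=1$ and $\omega(x)=x^{c-1}(1-x)^{a+b-c}$, convergence at the endpoints $x=0,1$ requires the mild conditions $\mathrm{Re}(c)>0$ and $\mathrm{Re}(a+b-c)>-1$. These are already implicit in the orthonormality asserted by Lemma \ref{ortho}, so I would simply record them as standing hypotheses and treat the endpoint estimate as routine; no further analytic input beyond Cauchy--Schwarz is needed.
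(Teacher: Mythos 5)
Your proof is correct and follows essentially the same route as the paper's: both are a single application of the Cauchy--Schwarz inequality with the weight $\omega$ split between $f$ and $\rho/\omega=|y_1|^2$, followed by the normalization $\langle f,f\rangle_\omega=1$, with equality characterized by proportionality. Your added remarks on the sign of $\lambda_1$ (i.e.\ reading the bound for $|\lambda_1|$ or real $f$ with $C>0$) and on endpoint integrability of $|y_1|^4\omega$ are refinements the paper omits, but they do not change the argument.
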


\begin{myproof}
By the Cauchy-Schwarz inequality, we know that  
\[\lambda_1 \leq \left(\int_0^1dx\frac{\rho^2}{\omega}\right)^{\frac{1}{2}}\left(\int_0^1dx\omega f^2\right)^{\frac{1}{2}}=\left(\int_0^1dx\frac{\rho^2}{\omega}\right)^{\frac{1}{2}}\]
with equality if and only if $f(x)=C\rho(x)/\omega(x) = C|y_1(x)|^2$.
\end{myproof}

\vspace{1.5\baselineskip}

  \bibliographystyle{plain}
  \bibliography{bibliography}

\appendix
\section{Appendix -Derivation of the general form of the n-th order hypergeometric equation}
In this section, we will derive ${\eqref{eq1.2}}$.

\begin{lemma}
    The operator ${(z\frac{d}{dz})^k}$ is equal to following operator
    \begin{equation}
        \sum^k_{n=0}c(k,n)z^n(\frac{d}{dz})^n
        \label{A1}
    \end{equation}
    \label{lemma A}
    where $c(k,n)$ is a Stirling number of the second kind.
\end{lemma}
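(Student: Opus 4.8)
The plan is to prove the identity by induction on $k$, with the combinatorial content supplied entirely by the defining recurrence of the Stirling numbers of the second kind, $c(k+1,n) = n\,c(k,n) + c(k,n-1)$, together with the boundary values $c(0,0)=1$ and $c(k,0)=0$ for $k\ge 1$. For the base case $k=0$, the operator $(z\frac{d}{dz})^0$ is the identity, and the right-hand side collapses to $c(0,0)\,z^0(\frac{d}{dz})^0 = \mathrm{id}$, so the two sides agree.

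For the inductive step, I would assume $(z\frac{d}{dz})^k = \sum_{n=0}^k c(k,n)\,z^n(\frac{d}{dz})^n$ and compose with $z\frac{d}{dz}$ on the left. The single indispensable computation is the action of the Euler operator on a monomial operator $z^n(\frac{d}{dz})^n$: applying the Leibniz rule to $\frac{d}{dz}(z^n y^{(n)})$ and multiplying by $z$ yields the operator identity
\[
z\frac{d}{dz}\,\Big(z^n\tfrac{d^n}{dz^n}\Big) = n\,z^n\tfrac{d^n}{dz^n} + z^{n+1}\tfrac{d^{n+1}}{dz^{n+1}}.
\]
Substituting this into each term of the inductive hypothesis splits $(z\frac{d}{dz})^{k+1}$ into a diagonal sum $\sum_n n\,c(k,n)\,z^n(\frac{d}{dz})^n$ and a shifted sum $\sum_n c(k,n)\,z^{n+1}(\frac{d}{dz})^{n+1}$.

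The final step is to reindex the shifted sum by $m=n+1$ and collect coefficients: the coefficient of $z^n(\frac{d}{dz})^n$ in $(z\frac{d}{dz})^{k+1}$ becomes $n\,c(k,n) + c(k,n-1)$, which is exactly $c(k+1,n)$ by the Stirling recurrence. This closes the induction and establishes the claimed formula. I do not anticipate a genuine obstacle here; the only point requiring care is the bookkeeping at the summation endpoints — ensuring the $n=0$ and $n=k+1$ contributions are accounted for correctly after the reindexing — which is handled cleanly by the boundary conventions $c(k,0)=0$ for $k\ge 1$ and $c(k,k+1)=0$.
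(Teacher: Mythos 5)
Your proposal is correct and follows essentially the same route as the paper: induction on $k$, the key operator identity $z\frac{d}{dz}\bigl(z^n\frac{d^n}{dz^n}\bigr) = n\,z^n\frac{d^n}{dz^n} + z^{n+1}\frac{d^{n+1}}{dz^{n+1}}$, reindexing the shifted sum, and matching coefficients against the defining Stirling recurrence $c(k+1,n)=n\,c(k,n)+c(k,n-1)$. The only cosmetic differences are your choice of $k=0$ rather than $k=1$ as the base case and your more explicit treatment of the summation endpoints, which in fact tidies up a point the paper leaves implicit.
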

\begin{proof}
    We will prove this by induction. For $k=1$
    \begin{equation}
        z\frac{d}{dz} = [c(1,0)+c(1,1)z\frac{d}{dz}]=z\frac{d}{dz}.
    \end{equation}
    If ${\eqref{A1}}$ is true for the order $k$ operator, then for $k+1$ we have 
\begin{equation}
    \left(z\frac{d}{dz}\right)^{k+1} = \left(z\frac{d}{dz}\right)\left(z\frac{d}{dz}\right)^k=\left(z\frac{d}{dz}\right)\sum^k_{n=0}c(k,n)z^n\left(\frac{d}{dz}\right)^n=\sum^k_{n=0}c(k,n)\left(z\frac{d}{dz}\right)[z^n\frac{d^n}{dz^n}]
    \label{A3}
\end{equation}
The right hand side is equal to 
\begin{equation}
    \sum^k_{n=0}c(k,n)z\left(nz^{n-1}\frac{d^n}{dz^n}+z^n\frac{d^{n+1}}{dz^{n+1}}\right)=\sum^k_{n=0}nz^n(\frac{d^n}{dz^n})+\sum^k_{n=0}c(k,n)z^{n+1}\frac{d^{n+1}}{dz^{n+1}}
    \label{A4}
\end{equation}
Collecting like terms, we find
\begin{equation}
        \sum^{k+1}_{m=0}mc(k,m)z^m(\frac{d^m}{dz^m})+\sum^{k+1}_{m=0}c(k,m-1)z^{m}\frac{d^{m}}{dz^{m}}=\sum^{k+1}_{m=0}[mc(k,m)+c(k,m-1)]z^m\frac{d^m}{dz^m}
        \label{A6}
\end{equation}
Thus, we have the recurrence
\begin{equation}
    c(k,m)m+c(k,m-1) = c(k+1,m)
\end{equation}
which is precisely the defining recurrence of the Stirling numbers. The claim follows.
\end{proof}

\begin{theorem}
The left hand side expansion of Equation ${\eqref{eq 1.1}}$ can be expressed as
\begin{equation}
       \prod_{j=1}^{p} \left( x \frac{d}{dx} + a_j \right)y  =\sum^p_{n=0}[\sum^p_{k=n}c(k,n)e_{p-k}(a_1,...,a_p)z^n]\frac{d^n}{dz^n}y
       \label{A8}
\end{equation}
\end{theorem}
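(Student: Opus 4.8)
The plan is to exploit the fact that every factor $\left(z\frac{d}{dz}+a_j\right)$ is a polynomial in the single Euler operator $\theta\defeq z\frac{d}{dz}$, so that all $p$ factors commute and the product collapses to an ordinary polynomial in $\theta$. Writing $\theta$ for $z\frac{d}{dz}$, I would first observe that
\[
\prod_{j=1}^{p}\left(\theta + a_j\right) = \sum_{k=0}^{p} e_{p-k}(a_1,\ldots,a_p)\,\theta^k,
\]
since expanding the product and grouping by the power of $\theta$ leaves, as the coefficient of $\theta^k$, the sum of all products of $p-k$ distinct parameters $a_j$ — which is by definition the elementary symmetric polynomial $e_{p-k}(a_1,\ldots,a_p)$. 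This step is just the generating identity $\prod_j(t+a_j)=\sum_k e_{p-k}(a)\,t^k$ with the commuting operator $\theta$ in place of the scalar $t$.

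Next I would substitute the expansion of each power $\theta^k$ supplied by Lemma \ref{lemma A}, namely $\theta^k = \sum_{n=0}^{k} c(k,n)\,z^n\frac{d^n}{dz^n}$, giving the double sum
\[
\prod_{j=1}^{p}\left(\theta + a_j\right) = \sum_{k=0}^{p}\sum_{n=0}^{k} e_{p-k}(a_1,\ldots,a_p)\,c(k,n)\,z^n\frac{d^n}{dz^n}.
\]
Finally I would interchange the order of summation. The index set $\{(k,n):0\le n\le k\le p\}$ is equally described by $0\le n\le p$ together with $n\le k\le p$, so swapping yields
\[
\sum_{n=0}^{p}\left[\sum_{k=n}^{p} c(k,n)\,e_{p-k}(a_1,\ldots,a_p)\right]z^n\frac{d^n}{dz^n},
\]
which is exactly the right-hand side of \eqref{A8} after applying both sides to $y$.

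The argument is essentially bookkeeping, so there is no deep obstacle; the only points demanding care are (i) confirming the elementary-symmetric indexing — the coefficient of $\theta^k$ is $e_{p-k}$ and not $e_k$ — which is pinned down by checking the extreme cases $k=p$ (coefficient $1=e_0$) and $k=0$ (coefficient $a_1\cdots a_p=e_p$); and (ii) getting the limits right when reversing the summation order, since a careless swap is the most likely source of an off-by-one error in the inner sum's lower limit $k=n$. Because the factors genuinely commute, no ordering subtleties arise in the first step, and Lemma \ref{lemma A} is applied verbatim in the second.
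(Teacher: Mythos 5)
Your proof is correct and takes essentially the same route as the paper's: write $\theta = z\frac{d}{dz}$, expand $\prod_{j=1}^{p}(\theta+a_j)=\sum_{k=0}^{p}e_{p-k}(a_1,\ldots,a_p)\,\theta^k$ via the elementary symmetric polynomials, and substitute the Stirling-number expansion of $\theta^k$ from Lemma \ref{lemma A}. If anything, your write-up is slightly more complete, since the paper stops at the double sum and leaves the final interchange of summation order (and the resulting limits $n\le k\le p$) implicit.
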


\begin{myproof}
   Writing $\theta=x\frac{d}{dx}$, we can expand the product in terms of elementary symmetry polynomials \cite{macdonald1998symmetric}
   \begin{equation}
       \prod_{j=1}^{p} \left( \theta+ a_j \right)= \sum^p_{k=0}e_{p-k}(a_1,..,a_p)\theta^k = \sum^p_{k=0}e_{p-k}(a_1,..,a_p)\sum^m_{n=0}c(k,n)z^n\frac{d^n}{dz^n}
   \end{equation}
   \end{myproof}
   
\begin{lemma}
 For a polynomial ${f(x)}$,
    \begin{equation}
        \frac{df}{dx} = f(\theta+1)\frac{d\theta}{dx}
    \end{equation}
    for the differential operator $\theta=x\frac{d}{dx}$.
    \label{lemma A4}
\end{lemma}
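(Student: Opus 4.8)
The plan is to read the claimed identity as the operator commutation relation
\[
\frac{d}{dx}\,f(\theta) = f(\theta+1)\,\frac{d}{dx},\qquad \theta = x\frac{d}{dx},
\]
which is exactly the manipulation needed to move the outer derivative in \eqref{eq 1.1} past the product $\prod_{k}(\theta + b_k - 1)$ and thereby reach \eqref{eq1.2}. The whole statement reduces to a single commutator. First I would compute, for any twice-differentiable $g$,
\[
\frac{d}{dx}(\theta g) = \frac{d}{dx}\big(x g'\big) = g' + x g'' = \frac{d}{dx}g + \theta\,\frac{d}{dx}g,
\]
which gives the base relation $\frac{d}{dx}\,\theta = (\theta+1)\,\frac{d}{dx}$, equivalently the commutator $[\tfrac{d}{dx},\theta]=\tfrac{d}{dx}$.

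Next I would promote this to arbitrary powers by induction on $k$. Assuming $\frac{d}{dx}\,\theta^k = (\theta+1)^k\,\frac{d}{dx}$, I would write
\[
\frac{d}{dx}\,\theta^{k+1} = \Big(\frac{d}{dx}\,\theta^k\Big)\theta = (\theta+1)^k\,\frac{d}{dx}\,\theta = (\theta+1)^k(\theta+1)\,\frac{d}{dx} = (\theta+1)^{k+1}\,\frac{d}{dx},
\]
using the base relation in the penultimate step (applied to the trailing factor $\theta$). By linearity in the coefficients of $f$, this extends at once from the monomials $\theta^k$ to an arbitrary polynomial, yielding $\frac{d}{dx}f(\theta)=f(\theta+1)\frac{d}{dx}$.

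Alternatively, and more transparently, I would test the identity on the monomial basis. Since $\theta x^m = m x^m$, each $x^m$ is an eigenvector of $\theta$ with eigenvalue $m$, so $f(\theta)x^m = f(m)x^m$. Applying both sides to $x^m$ gives $\frac{d}{dx}f(\theta)x^m = f(m)\,m\,x^{m-1}$ on the left, while on the right $f(\theta+1)\frac{d}{dx}x^m = m\,f(\theta+1)x^{m-1} = m\,f(m)\,x^{m-1}$, using $(\theta+1)x^{m-1}=mx^{m-1}$. The two sides coincide, and since both are linear operators agreeing on every monomial they agree on all polynomials (and by continuity on the analytic solutions to which the lemma is applied). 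This eigenfunction computation also makes clear why the shift $\theta\mapsto\theta+1$ is the correct one.

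I do not expect a genuine obstacle here: the entire content of the lemma is the commutator $[\tfrac{d}{dx},\theta]=\tfrac{d}{dx}$, and everything else is formal. The only point requiring care is the bookkeeping in the inductive step, namely applying the base relation to the single trailing $\theta$ rather than to the block $\theta^k$; I would present the eigenfunction argument as the main proof precisely because it sidesteps the induction and the shift appears by direct inspection.
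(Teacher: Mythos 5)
Your proposal is correct, and it in fact contains two valid proofs. Your first argument --- establishing the base relation $[\tfrac{d}{dx},\theta]=\tfrac{d}{dx}$ and propagating it to the powers $\theta^k$ by induction, then to arbitrary $f$ by linearity --- is essentially the paper's own proof: the paper computes the commutator $[\theta,\tfrac{d}{dz}]=-\tfrac{d}{dz}$ and then invokes a small abstract claim (if $[A,B]=\lambda B$ then $BA^n=(A-\lambda)^nB$, proved by induction), which with $A=\theta$, $B=\tfrac{d}{dz}$, $\lambda=-1$ is exactly your inductive step in disguise. Where you genuinely differ is in the eigenfunction argument you choose to feature as the main proof: the paper never tests the identity on the monomial basis. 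That route buys transparency --- since $\theta x^m = m x^m$, the shift $\theta\mapsto\theta+1$ is forced by direct inspection and no inductive bookkeeping is required --- but it establishes the identity only as operators on polynomials, so it carries the extra (minor, and correctly flagged by you) obligation of extending linearly and degreewise to the formal power series and analytic solutions to which the lemma is actually applied in Appendix A. The paper's operator-theoretic route sidesteps that extension entirely, since it is a purely formal identity of operators, and its abstract claim about $[A,B]=\lambda B$ is reusable beyond this particular pair. Either write-up would be acceptable; if you present the eigenfunction computation as the main proof, state explicitly that agreement on monomials extends to the function spaces in play.
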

\begin{proof}
Consider ${f(\theta)}$
\begin{equation}
    f(\theta) = \sum^p_na_n\theta^n
    \label{A11}
\end{equation}
Consider the commutator
\begin{equation}
  [\theta,\frac{d}{dz}]= \theta \frac{d}{dz}- \frac{d}{dz}\theta = z\frac{d}{dz}\frac{d}{dz}-\frac{d}{dz}(z\frac{d}{dz})=-\frac{d}{dz}
\end{equation}
We then need the following claim.

\begin{claim}
Given two linear operators $A$, $B$such that [A,B] = ${\lambda}$B then the following identity can be found.
\begin{equation}
    BA^n=(A-\lambda)^nB
\end{equation}
\label{claim A4}
\end{claim}

This claim follows easily by induction, and so 
\begin{equation}
    \frac{df(\theta)}{dz}=\sum^p_na_n\frac{d\theta^n}{dz}= \sum^p_n  a_n(\theta+1)^n\frac{d}{dz}=f(\theta+1)\frac{d}{dz}
\end{equation}
\end{proof}
   \begin{theorem}
    The right hand side of equation${\eqref{eq 1.1}}$ is 
    \begin{equation}
 \sum_{n=1}^{p} \left[
\sum_{k=n-1}^{p-1}
e_{p-k-1}(b_1 - 1,...,b_{p-1}-1)
\sum_{j=n-1}^{k}
\binom{k}{j}
s(j, n-1)
\right]
z^{n-1} \frac{d^n}{dz^n}
    \end{equation}
   \end{theorem}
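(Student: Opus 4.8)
The plan is to apply to the right-hand factor of \eqref{eq 1.1} exactly the three ingredients already assembled in this appendix: the elementary-symmetric expansion of a product in $\theta = x\frac{d}{dx}$, the intertwining relation of Lemma \ref{lemma A4}, and the Stirling expansion of Lemma \ref{lemma A}. First I would expand the product in the same way as in the derivation of \eqref{A8}, but now with the parameters $b_k-1$ in place of the $a_j$ and only $q=p-1$ factors, giving
\[
\prod_{k=1}^{p-1}\left(\theta + b_k - 1\right) = \sum_{k=0}^{p-1} e_{p-1-k}(b_1-1,\ldots,b_{p-1}-1)\,\theta^k.
\]

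Next I would push the outer $\frac{d}{dz}$ through each power $\theta^k$. The commutator $[\theta,\frac{d}{dz}]=-\frac{d}{dz}$ combined with Claim \ref{claim A4} (the case $\lambda=-1$) gives $\frac{d}{dz}\theta^k = (\theta+1)^k\frac{d}{dz}$, which is the content of Lemma \ref{lemma A4} in operator form $\frac{d}{dz}f(\theta)=f(\theta+1)\frac{d}{dz}$. Expanding $(\theta+1)^k=\sum_{j=0}^{k}\binom{k}{j}\theta^j$ by the binomial theorem then yields
\[
\frac{d}{dz}\prod_{k=1}^{p-1}\left(\theta+b_k-1\right) = \sum_{k=0}^{p-1} e_{p-1-k}(b_1-1,\ldots,b_{p-1}-1)\sum_{j=0}^{k}\binom{k}{j}\,\theta^j\,\frac{d}{dz}.
\]
I would then invoke Lemma \ref{lemma A} on each $\theta^j$, writing $\theta^j\frac{d}{dz}=\sum_{m=0}^{j}c(j,m)z^m\frac{d^{m+1}}{dz^{m+1}}$ and substituting $n=m+1$, so that every term takes the shape $z^{n-1}\frac{d^n}{dz^n}$ appearing in the claimed formula (here I read the $s(j,n-1)$ of the statement as the Stirling numbers $c(j,n-1)$ defined in Lemma \ref{lemma A}).

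The only genuinely delicate step, and the one I expect to be the main obstacle, is the final reindexing. After substitution the expression is a triple sum over $(k,j,n)$ constrained by $0\le j\le k\le p-1$ and $1\le n\le j+1$; to reach the stated form I would pull the $n$-summation outermost and read off the induced ranges. For fixed $n$ one needs $j\ge n-1$ and $k\ge j$, which forces $n-1\le j\le k$ and $n-1\le k\le p-1$, with $n$ running from $1$ to $p$ since the maximal value of $j+1$ is $p$. Collecting the coefficient of each $z^{n-1}\frac{d^n}{dz^n}$ then produces precisely $\sum_{k=n-1}^{p-1} e_{p-k-1}(b_1-1,\ldots,b_{p-1}-1)\sum_{j=n-1}^{k}\binom{k}{j}c(j,n-1)$, which is the asserted identity. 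Getting these summation limits exactly right is pure bookkeeping, but it is the place where an off-by-one in the binomial or Stirling index would spoil the match; everything else is a direct application of the lemmas already proved.
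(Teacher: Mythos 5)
Your proposal is correct and follows essentially the same route as the paper's own proof: expand $\prod_{k}(\theta+b_k-1)$ in elementary symmetric polynomials, commute $\frac{d}{dz}$ past $\theta^k$ via Lemma \ref{lemma A4} (equivalently Claim \ref{claim A4} with $\lambda=-1$), expand $(\theta+1)^k$ binomially, apply the Stirling expansion of Lemma \ref{lemma A}, and reindex. In fact you supply more detail than the paper does at the final step --- the paper compresses your ``genuinely delicate'' triple-sum reindexing into the phrase ``swapping the order of summation and simplifying'' --- and your reading of $s(j,n-1)$ as the Stirling number $c(j,n-1)$ is exactly right; that is a typo in the theorem statement, corrected in the paper's own displayed conclusion.
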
 
\begin{myproof}
    \begin{equation}
        \frac{d}{dz}\prod_{k=1}^{p-1} \left( x \frac{d}{dz} + b_k -1 \right) y(x) = \frac{d}{dz}[\sum^{p-1}_{k}e_{p-k-1}(b_1-1,b_2-1,...,b_{p-1})(z\frac{d\theta}{dz})^k]
    \end{equation}
    \begin{equation}
        \frac{d}{dz}[\sum^{p-1}_{k}e_{p-k-1}(b_1-1,b_2-1,...,b_{p-1})\theta^k = \sum^{p-1}_{k}e_{p-k-1}(b_1-1,b_2-1,...,b_{p-1})\frac{d\theta^k}{dz}
        \label{A.19}
    \end{equation}
    By using lemma${\eqref{lemma A4}}$ this is equal to
    \begin{equation}
        \sum^{p-1}_{k}e_{p-k-1}(b_1-1,b_2-1,...,b_{p-1})(\theta+1)^k\frac{d}{dz}
        \label{A22}
    \end{equation}
    For Equation ${\eqref{A22}}$ the binomial expansion can be used to find ${(\theta+1)^k}$. Swapping the order of summation and simplifying, we then find

\begin{equation}
 \sum_{n=1}^{p} \left[
\sum_{k=n-1}^{p-1}
e_{p-k-1}(b_1 - 1,...,b_{p-1}-1)
\sum_{j=n-1}^{k}
\binom{k}{j}
c(j, n-1)
\right]
z^{n-1} \frac{d^n }{dz^n}
\end{equation}

where $c(j,n-1)$ is the Stirling number of the second kind, and $e_{p-k-1}(b_j - 1)$ is the elementary symmetric polynomial evaluated at $b_j - 1$.
Therefore we obtain 

\begin{equation}
\sum_{n=1}^{p} \left[
\sum_{k=n-1}^{p-1}
e_{p-k-1}(b_1 - 1,...,b_{p-1}-1)
\sum_{j=n-1}^{k}
\binom{k}{j}
c(j, n-1)
\right]
z^{n-1} \frac{d^n }{dz^n}
\end{equation}

\end{myproof}

Combining these, we obtain Equation ${\eqref{eq1.2}}$.

\section{Appendix-Series solution of nth-order deformed differential equation}
We consider a deformation of the $n$\textsuperscript{th} order differential equation by introducing a perturbation in its zeroth-order term. Specifically, we replace the equation:
\begin{equation}
    A_n(x)y^{(n)}+.....+A_0(x)y=0
    \label{homogenous}
\end{equation}
with the deformed version:
\begin{equation}
    A_n(x)y^{(n)}+.....+(A_0(x)+\rho f(x))y=0
    \label{deformed}
\end{equation}
where $\rho$ is a small or formal parameter and $f(x)$ is a continuous or holomorphic function.\\

 In general, the solution of the homogeneous equation ${\eqref{homogenous}}$ would be noted as a set $\{y_1,...,y_n\}$. However, for the deformed equation ${\eqref{deformed}}$, the solution ${y_{\rho}}$ can be expanded into Taylor series form
 \begin{equation}
     y_{\rho}=\sum y_n\rho^n
     \label{expand}
 \end{equation}
Therefore,  by expanding equation ${\eqref{deformed}}$, one can regroup the coefficients in front of \( \rho^n \) and set them to zero. This procedure yields a hierarchy of equations that can be solved recursively, order by order in \( \rho \):

    \begin{equation}
\left\{
\begin{aligned}
& A_n(x)y^{(n)}_0+.....+A_0(x)y_0 = = 0, \\
& A_n(x)y^{(n)}_1+.....+A_0(x)y_1 =-\rho y_0f(x), \\
&\quad \vdots \\
& A_n(x)y^{(n)}_n+.....+A_0(x)y_n =-\rho y_{n-1}f(x),
\end{aligned}
\right.
\label{general}
\end{equation}   
For the leading order of equation ${\eqref{general}}$, it is homogeneous and we asssume we know a basis of $n$ solutions. However, for the higher-order expansion, it is a non-homogeneous differential equation; as a result, we can use the variation of parameters method.
\begin{lemma}
    For a non-homogeneous differential equation, the particular solution could be found by the variation of parameters method.
    \label{lemma 2.2}
\end{lemma}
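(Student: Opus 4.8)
The plan is to reduce the $n$th-order non-homogeneous equation to a first-order linear system and then apply the matrix form of variation of parameters, exactly in the framework of Definition \ref{def ma 1}. Writing the equation $A_n(x)y^{(n)}+\cdots+A_0(x)y=g(x)$ in normalized form and collecting the unknown with its derivatives into $\vec{\psi}=(y,y',\ldots,y^{(n-1)})^{\mathrm{t}}$, the equation becomes
\[\frac{d\vec{\psi}}{dx}=A(x)\vec{\psi}+\vec{b}(x),\qquad \vec{b}(x)=\bigl(0,\ldots,0,\tfrac{g(x)}{A_n(x)}\bigr)^{\mathrm{t}},\]
with $A(x)$ the companion matrix. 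By hypothesis the homogeneous system $\vec{\psi}'=A(x)\vec{\psi}$ has a fundamental matrix $W(x)$ assembled from the basis $\{y_1,\ldots,y_n\}$, and on the open set where the solutions are defined $W(x)$ is invertible because the $y_i$ are linearly independent.

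The key step is the ansatz $\vec{\psi}_p(x)=W(x)\vec{u}(x)$, where the constant coefficient vector of the general homogeneous solution is replaced by an unknown function $\vec{u}(x)$; this is precisely what "varying the parameters" means. Differentiating and using $W'=A W$ gives
\[\vec{\psi}_p'=W'\vec{u}+W\vec{u}'=A\vec{\psi}_p+W\vec{u}',\]
so $\vec{\psi}_p$ solves the non-homogeneous system if and only if $W(x)\vec{u}'(x)=\vec{b}(x)$. Since $W$ is invertible this determines $\vec{u}'=W^{-1}\vec{b}$ uniquely, and integrating yields
\[\vec{u}(x)=\int_{x_0}^x W^{-1}(t)\,\vec{b}(t)\,dt,\qquad \vec{\psi}_p(x)=W(x)\int_{x_0}^x W^{-1}(t)\,\vec{b}(t)\,dt.\]
The first component of $\vec{\psi}_p$ is then the desired particular solution $y_p$, and solving $W\vec{u}'=\vec{b}$ component-wise by Cramer's rule recovers the familiar scalar variation-of-parameters formulas for the $u_i'$ in terms of the Wronskian and its cofactors.

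The only genuine issue is the invertibility of $W(x)$: the argument produces $\vec{u}'$ by inverting $W$, which would fail exactly where the Wronskian vanishes. In our setting this is harmless, since linear independence of a fundamental system forces the Wronskian to be non-vanishing on the common domain of definition — by Abel's identity it is either identically zero or nowhere zero — and the paper already assumes the fundamental matrix is generically invertible. Near the regular singular points of the hypergeometric equation one works on a simply connected punctured neighborhood where $W$ stays invertible, so the construction goes through locally and the integral defining $\vec{u}$ is well posed. I expect the genuine conceptual content to be minimal, and the main thing to verify carefully to be the clean match between the compact matrix formula and the component-wise statement of the lemma, together with the domain on which the integral is taken.
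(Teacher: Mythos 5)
Your proof is correct, but it takes a different route from the paper's. The paper's own proof is not a construction at all: it merely verifies the applicability conditions of the method (the equation is linear, the homogeneous counterpart has $n$ known independent solutions, and the Wronskian is nonzero away from $x=0,1,\infty$) and then asserts that variation of parameters is valid; the actual formulas are developed afterwards, outside the proof, in scalar form --- the ansatz $y_p=\sum_i u_i y_i$, the imposed constraints $\sum_i u_i' y_i^{(k)}=0$ for $k\le n-2$, and Cramer's rule. You instead pass to the first-order companion system $\vec{\psi}'=A\vec{\psi}+\vec{b}$ and make the matrix ansatz $\vec{\psi}_p=W\vec{u}$, which forces $W\vec{u}'=\vec{b}$ and hence $\vec{u}=\int W^{-1}\vec{b}\,dt$. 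This buys two things the paper's treatment lacks: first, the scalar constraints, which the paper simply imposes ``to avoid extra derivatives,'' are \emph{derived} in your version --- they are exactly the statement that the first $n-1$ components of $W\vec{u}'$ vanish, as they must since $\vec{b}$ is supported in the last entry; second, your argument actually proves that the construction yields a solution, rather than asserting that a known method applies. It also meshes naturally with the paper's own machinery (Definition \ref{def ma 1} and the gauge/Dyson formalism of Section 2, where $W\int W^{-1}BW$ plays the same structural role). Your closing discussion of invertibility via Abel's identity is the right way to handle the singular points and matches the paper's standing genericity assumption, so there is no gap.
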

\begin{myproof}
    We consider the inhomogeneous hypergeometric-type differential equation
    \begin{equation}
        A_n(x)y^{(n)}+.....+A_0(x)y =-\rho yf(x)
        \label{inhyper}
    \end{equation}

This equation satisfies the conditions required for applying the variation of parameters method:
\begin{itemize}
    \item It is a linear differential equation.
    \item The homogeneous counterpart (with $f(x) = 0$) has $n$ known linearly independent solutions, (typically expressed in terms of $_{n}F_{n-1}$ hypergeometric functions in the hypergeometric case).
    \item The Wronskian determinant $\det(W(x))$ is nonzero on intervals excluding the singular points $x = 0$, $x = 1$, and possibly $\infty$.
\end{itemize}

Therefore, the variation of parameters method is valid for constructing a particular solution to the equation.
\end{myproof}
We assume a particular solution ${y_{p}}$ can be expressed as
\begin{equation}
y_p(x) = \sum_{i=1}^n u_i(x)y_i(x)
\label{Varition method}
\end{equation}
where \( u_1(x), \dots, u_n(x) \) are differentiable functions known as \textit{variation parameters}, to be determined.
This is a linear system of \( n \) equations in the \( n \) unknowns \( u_1'(x), \dots, u_n'(x) \), with coefficient matrix given by the Wronskian matrix:
\[
W(x) = \begin{pmatrix}
y_1(x) & y_2(x) & \cdots & y_n(x) \\
y_1'(x) & y_2'(x) & \cdots & y_n'(x) \\
\vdots & \vdots & \ddots & \vdots \\
y_1^{(n-1)}(x) & y_2^{(n-1)}(x) & \cdots & y_n^{(n-1)}(x)
\end{pmatrix}.
\]

To ensure that the expression for \( y(x) \) does not contribute extra derivatives beyond order \( n \), we impose the following system of constraints:
\[
\begin{cases}
\sum_{i=1}^n u_i'(x)y_i(x) = 0, \\
\sum_{i=1}^n u_i'(x)y_i'(x) = 0, \\
\quad\vdots \\
\sum_{i=1}^n u_i'(x)y_i^{(n-2)}(x) = 0, \\
\sum_{i=1}^n u_i'(x)y_i^{(n-1)}(x) = f(x).
\end{cases}
\]
or equivalently
\begin{equation}
    \begin{pmatrix} 
        y'_1 & ...& y'_n\\
        y''_1 & ...&y''_n \\
         &   \cdots\\
          & \cdots \\
       y^{(n-1)}_1 & \cdots & y^{(n-1)}_n
    \end{pmatrix} \begin{pmatrix}
        u'_1\\
        .\\
        .\\
        .\\
        u'_n
    \end{pmatrix} = \begin{pmatrix}
        0\\
        0\\
        0\\
        .\\
        f(x)
    \end{pmatrix}
\end{equation}
Then, by using Cramer's rule, ${u'_i}$ can be easily computed 

\begin{equation}
u_i'(x) = \frac{
\begin{vmatrix}
y_1(x) & \cdots & y_{i-1}(x) & 0 & y_{i+1}(x) & \cdots & y_n(x) \\
y_1'(x) & \cdots & y_{i-1}'(x) & 0 & y_{i+1}'(x) & \cdots & y_n'(x) \\
\vdots & & \vdots & \vdots & \vdots & & \vdots \\
y_1^{(n-2)}(x) & \cdots & y_{i-1}^{(n-2)}(x) & 0 & y_{i+1}^{(n-2)}(x) & \cdots & y_n^{(n-2)}(x) \\
y_1^{(n-1)}(x) & \cdots & y_{i-1}^{(n-1)}(x) & f(x) & y_{i+1}^{(n-1)}(x) & \cdots & y_n^{(n-1)}(x)
\end{vmatrix}
}{
det(W(y_1, y_2, \ldots, y_n)(x))
}
\end{equation}
Therefore, ${u_i}$ can be found by using integration
\begin{equation}
    u_i = \int \frac{det\Omega}{detW} dx \ \ \ \ \ \ \ \ \Omega= \begin{vmatrix}
y_1(x) & \cdots & y_{i-1}(x) & 0 & y_{i+1}(x) & \cdots & y_n(x) \\
y_1'(x) & \cdots & y_{i-1}'(x) & 0 & y_{i+1}'(x) & \cdots & y_n'(x) \\
\vdots & & \vdots & \vdots & \vdots & & \vdots \\
y_1^{(n-2)}(x) & \cdots & y_{i-1}^{(n-2)}(x) & 0 & y_{i+1}^{(n-2)}(x) & \cdots & y_n^{(n-2)}(x) \\
y_1^{(n-1)}(x) & \cdots & y_{i-1}^{(n-1)}(x) & f(x) & y_{i+1}^{(n-1)}(x) & \cdots & y_n^{(n-1)}(x)
\end{vmatrix}
\label{equation 1.8}
\end{equation}
The particular solution can then be calculated using the equation \eqref{equation 1.8}. 
\begin{theorem}
    The full solution of the deformed nth order hypergeometric equation is 
    \begin{equation}
        y_{deformed} = \sum^k_{n=0}y_{h,n}\rho^n + \sum^k_{n=1}y_{p,n}\rho^n
        \label{equation 2.1}
    \end{equation}
    \label{theorem 2.3}
      where ${y_{h,n}}$ is a general solution to the homogeneous differential equation, $y_{p,n}$ is a solution to the corresponding inhomogeneous differential equation
\end{theorem}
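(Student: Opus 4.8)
The plan is to prove the statement by induction on the truncation order, treating $\rho$ as a formal parameter and reading off the coefficient of each power of $\rho$ from the substituted series. First I would insert the ansatz \eqref{expand}, $y_\rho = \sum_{n\ge 0} y_n\rho^n$, into the deformed equation \eqref{deformed}. Since the principal operator $A_n(x)\frac{d^n}{dx^n}+\cdots+A_0(x)$ is linear in $y$ and the perturbation contributes only the single term $\rho f(x)y$, collecting the coefficient of $\rho^n$ reproduces exactly the triangular hierarchy \eqref{general}: the order-zero equation is the original homogeneous equation \eqref{homogenous}, while for $n\ge 1$ the order-$n$ equation is the inhomogeneous problem $A_n y_n^{(n)}+\cdots+A_0 y_n = -f(x)\,y_{n-1}$, whose forcing term involves only the already-determined coefficient $y_{n-1}$.

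Next I would run the induction. At order zero, $y_0=y_{h,0}$ is a general solution of the homogeneous equation, available as a combination of the assumed fundamental system $\{y_1,\dots,y_n\}$ whose Wronskian is nonzero off the singular set $\{0,1,\infty\}$. For the inductive step, assuming $y_0,\dots,y_{n-1}$ have been constructed, the right-hand side $-f(x)\,y_{n-1}$ is a known continuous (or holomorphic) function, so the order-$n$ equation is a genuine linear inhomogeneous ODE sharing the homogeneous part of \eqref{homogenous}. The hypotheses of Lemma \ref{lemma 2.2} (linearity, a known fundamental system, and a non-vanishing Wronskian away from the singular points) are inherited unchanged at every order, so variation of parameters applies verbatim and produces a particular solution $y_{p,n}$ through the Cramer determinants. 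The general solution of the order-$n$ problem is therefore $y_n = y_{h,n}+y_{p,n}$, the arbitrary homogeneous piece $y_{h,n}$ encoding the freedom in the initial or boundary data at that order.

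Summing over $n$ and separating the homogeneous contributions from the particular ones yields the decomposition claimed in \eqref{equation 2.1}, where the particular sum begins at $n=1$ precisely because the order-zero problem is homogeneous. This completes the assembly: the theorem is the order-by-order solvability of \eqref{general} packaged as a single formal series.

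The main obstacle is bookkeeping rather than analysis, and it lies in the index-shift. One must verify that the perturbation $\rho f(x)y_\rho$ raises the power of $\rho$ by exactly one, so that order $n$ is forced solely by order $n-1$ and the recursion closes as a strictly lower-triangular system; in particular the spurious factors of $\rho$ written on the right-hand sides of \eqref{general} should be dropped once coefficients of like powers have been equated, otherwise the orders fail to match. A secondary, more delicate point is that the identity is \emph{a priori} a statement about formal power series in $\rho$: I would present it as such, and only remark that promoting it to a genuine convergent solution requires the ``sufficiently small $\rho$'' regime together with a Wronskian estimate controlling the iterated integrals $u_i$, a tail bound that I would flag as lying outside the present scope.
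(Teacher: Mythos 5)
Your proposal is correct and takes essentially the same route as the paper: substitute the formal series in $\rho$ into \eqref{deformed}, collect like powers to obtain the triangular hierarchy \eqref{general}, solve order zero as the homogeneous equation, and at each higher order invoke variation of parameters (Lemma \ref{lemma 2.2}) to produce the particular solution, so that summing the homogeneous and particular pieces gives \eqref{equation 2.1}. Your side remark is also accurate: the factors of $\rho$ written on the right-hand sides of \eqref{general} are spurious once coefficients of like powers have been equated, and the result is properly a statement about formal power series unless convergence is separately addressed.
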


\subsection{Example-second order case}
In this example, we consider a function \( f(x) \). When a small perturbative term \( \rho f(x) \), with \( \rho \) being a small parameter, is added to the coefficient of the zeroth-order derivative
\begin{equation}
   x(1-x)y''+[c-(a+b+1)x]y'-(ab+\rho f(x))y = 0 
   \label{2.1}
\end{equation}
 The solution of the entire differential equation changes. The main goal of this section is to study the correction to the solution induced by this modification.
From the equation${\eqref{expand}}$ can be expressed as:\\
\begin{equation}
   x(1-x)\sum^{\infty}_{n=0} y''_n\rho^n+[c-(a+b+1)x]\sum^{\infty}_{n=0} y'_n\rho^n-(ab+\rho f(x))\sum^{\infty}_{n=0} y_n\rho^n = 0  
   \label{2.3}
\end{equation}
In general, by expanding equation ${\eqref{2.3}}$, one can regroup the coefficients in front of \( \rho^n \) and set them to zero. This procedure yields a hierarchy of equations that can be solved recursively, order by order in \( \rho \).
\begin{equation}
\left\{
\begin{aligned}
& y_0'' +\frac{[c - (a + b + 1)x]}{x(1-x)} y_0' - \frac{ab}{x(1-x)}\, y_0 = 0, \\
& y_1'' + \frac{[c - (a + b + 1)x]}{x(1-x)} y_1' -\frac{ab}{x(1-x)}\, y_1 = \frac{y_0}{x(1-x)}, \\
&\quad \vdots \\
&y_k'' + \frac{[c - (a + b + 1)x]}{x(1-x)} y_k' - \frac{ab}{x(1-x)}\, y_k = \frac{y_{k-1}}{x(1-x)}
\end{aligned}
\right.
\label{2.4}
\end{equation}
The general solution of the equation ${\eqref{2.4}}$ can be found in ${\eqref{1.7}}$, then we can get the following
set of equations
\begin{equation}
\begin{cases}
y_{h0}(x) = A_{01} \,{}_2F_1(a, b; c; x) + A_{02} x^{1 - c} \,{}_2F_1(a - c + 1, b - c + 1; 2 - c; x) \\
y_{h1}(x) = A_{11} \,{}_2F_1(a, b; c; x) + A_{12} x^{1 - c} \,{}_2F_1(a - c + 1, b - c + 1; 2 - c; x) \\
\vdots \\
y_{hk}(x) = A_{k1} \,{}_2F_1(a, b; c; x) + A_{k2} x^{1 - c} \,{}_2F_1(a - c + 1, b - c + 1; 2 - c; x)
\end{cases}
\end{equation}
Therefore, following the theorem ${\eqref{theorem 2.3}}$, it is sufficient to compute the homogeneous solution in series form.
\begin{equation}
    y_h = \sum^{\infty}_{n=0}y_{nh} \rho^n
\end{equation}

\begin{theorem}
    Suppose there exists ${u_1}$ and ${u_2}$, such that ${y_{\rho i}}$ a particular solution of equation ${\eqref{2.4}}$ can be expressed as 
    \begin{equation}
        y_{\rho i} = u_1y_1+u_2y_2
        \label{equation 2.8}
    \end{equation}
    Imposing the constraint
    \begin{equation}
        u'_1y_1 +u'_2y_2= 0
    \end{equation}
    we can determine ${u_1}$ as
    \begin{equation}
      u_1= - \int\frac{f(x)}{A}x^{c}(1-x)^{c-(a+b+1)}y_2dx
        \label{Wroskian}
    \end{equation}
    \label{theorem 2.4}
    for some constant $A$, and similarly for $u_2$.
\end{theorem}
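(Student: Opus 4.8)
The plan is to apply the classical variation-of-parameters construction of Lemma~\ref{lemma 2.2}, specialised to the second-order case, to the first-order inhomogeneous equation in the hierarchy \eqref{2.4}. Writing that equation in standard form $y'' + P(x)y' + Q(x)y = g(x)$ with $P(x) = \frac{c-(a+b+1)x}{x(1-x)}$ and forcing term $g(x) = \frac{f(x)y_0}{x(1-x)}$ coming from the perturbation $\rho f(x)y_0$, I seek a particular solution $y_{\rho i} = u_1 y_1 + u_2 y_2$, where $y_1, y_2$ are the homogeneous solutions recorded in \eqref{1.7}.

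First I impose the stated constraint $u_1' y_1 + u_2' y_2 = 0$. Differentiating $y_{\rho i}$ once and using the constraint gives $y_{\rho i}' = u_1 y_1' + u_2 y_2'$; differentiating again, substituting into the ODE, and using that $y_1, y_2$ annihilate the homogeneous operator collapses everything to the single relation $u_1' y_1' + u_2' y_2' = g(x)$. Thus $(u_1', u_2')$ solve a $2\times2$ linear system whose coefficient matrix is the fundamental matrix $W$, and Cramer's rule gives $u_1' = -y_2\, g / \det W$ and, symmetrically, $u_2' = y_1\, g / \det W$.

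The key computational step is to evaluate $\det W = y_1 y_2' - y_1' y_2$ in closed form. Rather than expanding the hypergeometric series, I would invoke Abel's (Liouville's) formula $\det W(x) = \det W(x_0)\,\exp(-\int P(x)\,dx)$. Partial fractions give $P(x) = \frac{c}{x} + \frac{c-a-b-1}{1-x}$, so $\int P\,dx = c\log x - (c-a-b-1)\log(1-x)$ and hence $\det W(x) = A\, x^{-c}(1-x)^{c-a-b-1}$ for a nonzero constant $A$ fixed by the initial data. Substituting this into the Cramer expression for $u_1'$, the algebraic powers of $x$ and $1-x$ coming from $g$ and from $1/\det W$ combine, and integrating yields the claimed representation $u_1 = -\int \frac{f(x)}{A}\,x^{c}(1-x)^{c-(a+b+1)}\,y_2\,dx$, with $u_2$ obtained by the same argument after replacing $-y_2$ by $+y_1$.

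I expect the main obstacle to be bookkeeping rather than conceptual. The precise exponents in the final formula rest entirely on the Abel computation, so I would track the sign of $\int P\,dx$ and the normalisation of the forcing term carefully, since an off-by-sign there propagates directly into the $x$- and $(1-x)$-exponents. The second point of care is that the formula for $u_1$ is only an integral representation: for generic $f$ and the hypergeometric integrands $y_1,y_2$ the antiderivative has no elementary closed form, and one must check integrability near the regular singular points $x=0,1$, where $\det W$ degenerates, to guarantee that $y_{\rho i}$ is well defined on the relevant interval, exactly as flagged in Lemma~\ref{lemma 2.2}.
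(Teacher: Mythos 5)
Your route is the same as the paper's: variation of parameters under the constraint $u_1'y_1+u_2'y_2=0$, Cramer's rule for $(u_1',u_2')$, and Abel's formula to evaluate the Wronskian; your partial-fraction computation reproduces the paper's \eqref{exp 2.13} and gives $\det W = A\,x^{-c}(1-x)^{c-a-b-1}$, exactly as in the paper's proof.

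The gap is in the step you dismissed as bookkeeping: carried out, the substitution does \emph{not} yield the stated formula, and your own warning about sign-tracking is precisely where it breaks. With the forcing term $g=\frac{f(x)\,y_0}{x(1-x)}$ and
\[
\frac{1}{\det W}=\frac{1}{A}\,x^{c}(1-x)^{a+b+1-c},
\]
Cramer's rule gives
\[
u_1' \;=\; -\,\frac{y_2\,g}{\det W}
\;=\;-\,\frac{f(x)\,y_0\,y_2}{A}\;x^{c-1}(1-x)^{a+b-c},
\]
whereas the theorem claims the integrand $-\frac{f(x)}{A}\,x^{c}(1-x)^{c-(a+b+1)}\,y_2$. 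Three things disagree: the factor $y_0$ (the forcing of the $i$\textsuperscript{th} equation of \eqref{2.4} carries $y_{i-1}$, which cannot be dropped; the paper's own later formula \eqref{2.15} keeps it); the power $x^{c-1}$ versus $x^{c}$ (lost if one forgets to normalise the forcing by the leading coefficient $x(1-x)$); and, most seriously, the sign of the $(1-x)$-exponent --- the stated formula carries the exponent of $\det W$ itself rather than of $1/\det W$. Since the mismatch involves non-constant factors, it cannot be absorbed into the constant $A$, so your assertion that "integrating yields the claimed representation" is the step that would fail. In fairness, the paper's own proof has the same hole: it stops at "we can similarly compute $\det(\Omega)$" without performing the substitution, and even writes the Cramer system with forcing $y_0$ in place of $\frac{f(x)y_0}{x(1-x)}$. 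What your (correctly set up) computation actually establishes is a corrected version of Equation \eqref{Wroskian}, not the formula as printed.
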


\begin{myproof}
From Lemma ${\eqref{lemma 2.2}}$, the non-homogeneous hypergeometric equation satisfies the condition of the variation parameter method; therefore, the particular solution can be expanded into the equation ${\eqref{equation 2.8}}$ form with constraint
\begin{equation}
u'_1(x)y_1(x)+u'_2(x)y_2(x)=0    
\end{equation}
and 
\begin{equation}
    y_0(x) = u'_1(x)y'_1(x)+u'_2(x)y'_2(x)
\end{equation} 
We can apply Abel's theorem to find $\det(W)$:
\begin{equation}
   \det(W) = Cexp(-\int\frac{[c - (a + b + 1)x]}{x(1-x)} dx)
   \label{exp 2.13}
\end{equation}
for some constant $C$. Hence
\begin{equation}
    det(W) = \frac{A}{x^{c}(1-x)^{(a+b+1)-c}}
\end{equation}
for some constant $A$. We can similarly compute $\det(\Omega)$, to obtain Equation ${\eqref{Wroskian}}$.
\end{myproof}

\begin{theorem}
   The ith-order particular solution ${y_{pi}}$ of equation ${\eqref{2.4}}$  is  
   \begin{equation}
- \frac{y_{i1}}{A}\int y_{i2}y_{i-1}  \, x^{c}(1 - x)^{c - (a + b + 1)} \, dx
+ \frac{y_{i2}}{A} \int y_{i1} y_{i-1}  \left( x^{c}(1 - x)^{c - (a + b + 1)} \right) dx = y_{pi}
\label{2.15}
\end{equation}
\end{theorem}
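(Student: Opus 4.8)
The plan is to read the $i$-th member of the hierarchy \eqref{2.4} as an inhomogeneous linear second-order ODE whose homogeneous part is precisely the hypergeometric equation and whose forcing term is the known function $y_{i-1}/\big(x(1-x)\big)$ inherited from the previous order. Once $y_{i-1}$ has been solved for, this forcing is a fixed function, so the whole statement is exactly what Theorem \ref{theorem 2.4} delivers, applied at level $i$; the only bookkeeping specific to this theorem is that the source now comes from $y_{i-1}$ rather than from $f(x)y_0$.

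First I would fix the fundamental system $\{y_{i1},y_{i2}\}$ of the homogeneous equation to be the two ${}_2F_1$ solutions recorded in \eqref{1.7}, whose Wronskian matrix $W$ is invertible on any interval avoiding $x=0,1,\infty$; this is the setting in which Lemma \ref{lemma 2.2} guarantees that variation of parameters applies. I would then posit $y_{pi}=u_1y_{i1}+u_2y_{i2}$, impose the usual constraint $u_1'y_{i1}+u_2'y_{i2}=0$, and obtain the companion equation $u_1'y_{i1}'+u_2'y_{i2}'=y_{i-1}/\big(x(1-x)\big)$. Solving this $2\times 2$ linear system by Cramer's rule expresses $u_1'$ and $u_2'$ as $\det\Omega_j/\det W$, where $\Omega_j$ is obtained by replacing the $j$-th column of $W$ with the forcing vector.

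Next I would compute $\det W$ through Abel's identity, exactly as in \eqref{exp 2.13}, yielding $\det W = A\,x^{-c}(1-x)^{c-(a+b+1)}$ for a nonzero constant $A$. Substituting this, together with the forcing, into the Cramer ratios collapses the $x$- and $(1-x)$-powers into the single weight $x^{c}(1-x)^{c-(a+b+1)}$ that multiplies $y_{i2}y_{i-1}$ in $u_1'$ and $y_{i1}y_{i-1}$ in $u_2'$, with opposite signs. Integrating and reassembling $y_{pi}=u_1y_{i1}+u_2y_{i2}$ then produces the two-integral expression claimed in \eqref{2.15}, the sign pattern being exactly the one forced by Cramer's rule; this is the same manipulation that yields \eqref{Wroskian} in Theorem \ref{theorem 2.4}, now with $f(x)y_0$ replaced by $y_{i-1}$.

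The step I expect to demand the most care is the exponent arithmetic combining $1/\det W$ with the $1/\big(x(1-x)\big)$ coming from the forcing: one must check that the powers assemble into precisely $x^{c}(1-x)^{c-(a+b+1)}$ and that the constant $A$ enters as stated. Closely related is the matter of branches and domain, since $y_{i2}=x^{1-c}{}_2F_1(\cdots)$ is multivalued and the integrals are only meaningful away from the singular points $0,1,\infty$, where $\det W\neq 0$; this is the same genericity hypothesis on $a,b,c$ used throughout the section. Beyond these two points the argument is a direct specialisation of Theorem \ref{theorem 2.4} to each order, and no new idea is required.
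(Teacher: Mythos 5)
You follow the same route the paper itself takes: its proof of this theorem is a one-sentence deferral to the variation-of-parameters machinery just established (Lemma \ref{lemma 2.2} and Theorem \ref{theorem 2.4}), and your plan --- read the $i$-th line of \eqref{2.4} as an inhomogeneous hypergeometric equation with forcing $y_{i-1}/\bigl(x(1-x)\bigr)$, solve by Cramer's rule, evaluate $\det W$ via Abel's identity as in \eqref{exp 2.13}, then integrate and reassemble $y_{pi}=u_1y_{i1}+u_2y_{i2}$ --- is exactly that argument written out. The problem is that the one step you explicitly defer (``the exponent arithmetic'') is precisely the step that fails, so this is a genuine gap and not a bookkeeping issue.

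Concretely: you correctly record $\det W=A\,x^{-c}(1-x)^{c-(a+b+1)}$, but in the Cramer ratios $u_1'=-y_{i2}\,g/\det W$ and $u_2'=y_{i1}\,g/\det W$ the Wronskian enters through its \emph{reciprocal}, $1/\det W=A^{-1}x^{c}(1-x)^{(a+b+1)-c}$, so the $(1-x)$-exponent flips sign; and the forcing $g=y_{i-1}/\bigl(x(1-x)\bigr)$ lowers both exponents by one. The weight that actually appears under the integrals is therefore
\[
x^{c-1}(1-x)^{a+b-c}=\omega(x),
\]
the orthogonality weight of Lemma \ref{ortho}, and not the $x^{c}(1-x)^{c-(a+b+1)}$ you assert. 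Since $c-(a+b+1)=-(a+b-c)-1$, these are genuinely different functions, and no choice of the constant $A$ repairs the discrepancy; even if one drops the $1/\bigl(x(1-x)\bigr)$ normalisation of the forcing (as the proof of Theorem \ref{theorem 2.4} implicitly does), the exponent of $(1-x)$ still comes out as $(a+b+1)-c$, not $c-(a+b+1)$. In other words, carrying out the computation you sketch does not reproduce \eqref{2.15}; it produces a corrected formula with weight $\omega(x)$, which shows that the stated exponents in \eqref{2.15} (and in \eqref{Wroskian}, from which they are inherited) are themselves in error. A complete proof along your --- and the paper's --- lines must either derive and state the amended weight, or explain why the printed exponents are right, which they are not.
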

\begin{myproof}
    By using theorem ${\eqref{theorem 2.3}}$, every terms of the equation ${\eqref{equation 2.1}}$ can be easily expressed into equation${\eqref{2.15}}$ form.
\end{myproof}
Therefore, by using equation ${\eqref{equation 2.1}}$ , we can find the full solution of deformed equation${\eqref{2.1}}$.\\
The homogenous part of equation ${\eqref{equation 2.1}}$ is a power series of a hypergeometric function, and for a particular solution, it is a series of integration of f(x) times the hypergeometric equation; some integration techniques of hypergeometric functions can be found in \cite{riley2006mathematical}.


\Needspace*{3\baselineskip}
\noindent
\rule{\textwidth}{0.15em}
ZIYU ZHANG\\
{\noindent\small
Trinity College Dublin, School of Mathematics
\\
17 Westland Row
Trinity College Dublin
Dublin 2
Ireland\\
E-mail: \url{zhangz6@tcd.ie}
}\vspace{.5\baselineskip}

\end{document}